\documentclass[11pt]{amsart}
\usepackage{geometry}
\geometry{a4paper,scale=0.8}
\usepackage{graphicx}
\usepackage{amssymb}
\usepackage[lite]{amsrefs}
\usepackage{epstopdf}
\usepackage{tikz}
\usepackage[colorlinks]{hyperref}
\usepackage[all]{xy}
\usepackage{mathpazo}
\vfuzz2pt 
\hfuzz2pt 
\newtheorem{thm}{Theorem}[section]
\newtheorem{cor}[thm]{Corollary}

\newtheorem{prop}[thm]{Proposition}
\theoremstyle{definition}
\newtheorem{defn}[thm]{Definition}
\theoremstyle{remark}
\newtheorem{rem}[thm]{Remark}
\numberwithin{equation}{section}
\newtheorem{ex}{Example}[section]
\newtheorem*{pr}{Proof}

\newcommand{\set}[1]{\left\{#1\right\}}

\newcommand{\To}{\longrightarrow}

\begin{document}

\title[A categorification for the signed chromatic polynomial]{A categorification for the signed chromatic polynomial}
\author{Zhiyun Cheng}
\address{Laboratory of Mathematics and Complex Systems, School of Mathematical Sciences, Beijing Normal University, Beijing 100875, People's Republic of China}
\email{czy@bnu.edu.cn}

\author{Ziyi Lei}
\address{School of Mathematical Sciences, Beijing Normal University, Beijing 100875, People's Republic of China}
\email{201711130219@mail.bnu.edu.cn}

\author{Yitian Wang}
\address{School of Mathematical Sciences, Beijing Normal University, Beijing 100875, People's Republic of China}
\email{201711130205@mail.bnu.edu.cn}

\author{Yanguo Zhang}
\address{School of Mathematical Sciences, Beijing Normal University, Beijing 100875, People's Republic of China}
\email{201711130208@mail.bnu.edu.cn}
\subjclass[2010]{05C15, 05C22}
\keywords{signed graph, chromatic polynomial, balanced chromatic polynomial, categorification}

\begin{abstract}
By coloring a signed graph by signed colors, one obtains the signed chromatic polynomial of the signed graph. For each signed graph we construct graded cohomology groups whose graded Euler characteristic yields the signed chromatic polynomial of the signed graph. We show that the cohomology groups satisfy a long exact sequence which corresponds to signed deletion-contraction rule. This work is motivated by Helme-Guizon and Rong's construction of the categorification for the chromatic polynomial of unsigned graphs.
\end{abstract}
\maketitle
\noindent{\textbf{keywords}\quad signed graph, signed chromatic polynomial, chromatic cohomology}

\section{Introduction}
The chromatic polynomial, which encodes the number of distinct ways to color the vertices of a graph, was introduced by Birkhoff in attempt to attack the four-color problem \cite{Birkhoff, BL}. Birkhoff's definition is limited to the planar graphs, later Whitney extended this notion to nonplanar graphs. Although this polynomial did not lead to a solution to the four-color problem, it is one of the most important polynomials in graph theory. The reader is referred to \cite{RCR} for a nice introduction to the chromatic polynomial and \cite{JH} for some recent breakthrough.

In recent years, there are a lot of works on chromatic polynomial and its categorification. Motivated by Khovanov's seminal work on the categorification of Jones polynomial \cite{MK}, Helme-Guizon and Rong introduced a categorification for the chromatic polynomial by constructing graded cohomology groups whose graded Euler characteristic is equal to the chromatic polynomial of the graph \cite{LY}. Later, Jasso-Hernandez and Rong introduced a categorification for the Tutte polynomial \cite{HR}. By using the similar idea, Luse and Rong proposed a categorification for the Penrose polynomial, and put forward some relations with other categorifications \cite{KY}. Recently, Sazdanovic and Yip constructed a categorification of the chromatic symmetric function \cite{SY}, which can be considered as a generalization of the chromatic polynomial.

The chromatic cohomology was well studied during the past fifteen years. For example, in \cite{CCR} M. Chmutov, S. Chmutov and Y. Rong proved the knight move theorem for chromatic cohomology. It follows that the ranks of the cohomology groups are completely determined by the chromatic polynomial. We remark that the original knight move conjecture is false for Khovanov homology \cite{MM}. The reader is referred to \cite{GPR, LS} for some investigations on the torsion in chromatic cohomology.

A signed graph is a graph in which each edge is labeled with a positive sign or a negative sign. The signed graph coloring was first studied by Cartwright and Harary in \cite{CH}. In early 1980's, Zaslavsky tried to use signed colors to color signed graphs \cite{T}.  The main principle of how to color a signed graph is equivalent signed graphs have the same chromatic number. Here two signed graphs are said to be equivalent if they are related by finitely many vertex switchings. Zalavsky found some properties of signed graphs and introduced two kinds of chromatic polynomial, say the chromatic polynomial and the balanced chromatic polynomial. Recently, a good survey on this topic was written by Steffen and Vogel \cites{EA}.

It's natural to ask whether we can define a categorification for the chromatic polynomial and the balanced chromatic polynomial of signed graphs, following the categorification for the chromatic polynomial of unsigned graphs. The main aim of this paper is to construct such two categorifications. Furthermore, we can also put forward the so-called signed deletion-contraction rule. We show that the cohomology groups satisfy a long exact sequence corresponding to it, which is based on the corresponding exact sequence in Helme-Guizon and Rong's work \cite{LY}.

The rest of this paper is arranged as follows. Section \ref{section2} is devoted to give a brief introduction to the chromatic polynomial. In Section \ref{section3}, we give a quick review of basics of signed graphs and signed graph colorings. Then we recall the notion of signed chromatic polynomial, which combines the chromatic polynomial and the balanced chromatic polynomial of signed graphs. In the beginning of Section \ref{section4}, we recall the notion of graded dimension of graded $\mathbb{Z}$-modules, then construct the categorifications for the chromatic polynomial and the balanced chromatic polynomial. Several examples are also given. Some basic properties of the cohomology groups in these two categorifications are discussed in Section \ref{section5}.

\section{The chromatic polynomial}\label{section2}
 We begin our discussion with a brief introduction to the chromatic polynomial. We shall consistently use $G$ to denote a graph, and use $V(G), E(G)$ to denote its vertex set and edge set respectively. A \emph{proper coloring} of $G$ is an assignment of elements from a color set to $V(G)$, such that adjacent vertices receive different colors. In other words, a proper coloring of $G$ is a map $\varphi$ from $V(G)$ to a color set $C$, which requires that for any $v_1, v_2 \in V(G)$, if there exists an edge $e\in E(G)$ connecting $v_1$ and $v_2$, then $\varphi(v_1)\neq\varphi(v_2)$. If the color set $C=\{1, 2, \cdots, \lambda\}$, then we denote the number of all proper colorings of $G$ by $P_G(\lambda)$. It follows immediately that if $G$ contains a loop, i.e. an edge that connects a vertex to itself, then $P_G(\lambda)=0$. By using the deletion-contraction relation
$$P_{G}(\lambda)=P_{G-e}(\lambda)-P_{G/e}(\lambda),$$
it is not difficult to find that actually $P_G(\lambda)$ defines a polynomial \cite{RCR}, which is called the \emph{chromatic polynomial} of $G$. Here $G-e$ denotes the graph obtained from $G$ by removing the edge $e$, and $G/e$ is the graph obtained from $G$ by contracting the edge $e$.

It's obvious that $P_{N_k}=\lambda^{k}$, if $N_k$ is the graph consists of $k$ vertices but zero edges. Together with the deletion-contraction relation, these two relations uniquely determines $P_G(\lambda)$. As an example, the chromatic polynomial of $P_3=$
 \begin{tikzpicture}[baseline=0]
  \draw[fill] (0,0.5) circle (.07);
  \draw[fill] (0.5,-0.3) circle (.07);
  \draw[fill] (-0.5,-0.3) circle (.07);
  \draw (0,0.5)--(0.5,-0.3)--(-0.5,-0.3)--(0,0.5);
 \end{tikzpicture} can be calculated as follows
 \begin{equation*}
 	\begin{aligned}
 		P_\lambda\Bigg(\begin{tikzpicture}[baseline=0]
  \draw[fill] (0,0.5) circle (.07);
  \draw[fill] (0.5,-0.3) circle (.07);
  \draw[fill] (-0.5,-0.3) circle (.07);
  \draw (0,0.5)--(0.5,-0.3)--(-0.5,-0.3)--(0,0.5);
 \end{tikzpicture}\Bigg)
 &=
 P_\lambda\Bigg(\begin{tikzpicture}[baseline=0]
  \draw[fill] (0,0.5) circle (.07);
  \draw[fill] (0.5,-0.3) circle (.07);
  \draw[fill] (-0.5,-0.3) circle (.07);
  \draw (0,0.5)--(0.5,-0.3)(-0.5,-0.3)--(0,0.5);
 \end{tikzpicture}\Bigg)
 -
 P_\lambda\Bigg(\begin{tikzpicture}[baseline=0]
  \draw[fill] (0,0.5) circle (.07);
  \draw[fill] (0,-0.3) circle (.07);
  \draw (0,0.5)..controls (0.3,0.1)..(0,-0.3);
  \draw (0,-0.3)..controls (-0.3,0.1)..(0,0.5);
 \end{tikzpicture}\Bigg)\\
 &=
 P_\lambda\Bigg(\begin{tikzpicture}[baseline=0]
  \draw[fill] (0,0.5) circle (.07);
  \draw[fill] (0.5,-0.3) circle (.07);
  \draw[fill] (-0.5,-0.3) circle (.07);
  \draw (0,0.5)--(0.5,-0.3)(-0.5,-0.3)--(0,0.5);
 \end{tikzpicture}\Bigg)
 -
 P_\lambda\Bigg(\begin{tikzpicture}[baseline=0]
  \draw[fill] (0,0.5) circle (.07);
  \draw[fill] (0,-0.3) circle (.07);
  \draw (0,0.5)--(0,-0.3);
 \end{tikzpicture}\Bigg)\\
 &=
 P_\lambda\Bigg(\begin{tikzpicture}[baseline=0]
  \draw[fill] (0,0.5) circle (.07);
  \draw[fill] (0.5,-0.3) circle (.07);
  \draw[fill] (-0.5,-0.3) circle (.07);
  \draw (0,0.5)--(0.5,-0.3)(-0.5,-0.3)(0,0.5);
 \end{tikzpicture}\Bigg)
 -
 2P_\lambda\Bigg(\begin{tikzpicture}[baseline=0]
  \draw[fill] (0,0.5) circle (.07);
  \draw[fill] (0,-0.3) circle (.07);
  \draw (0,0.5)--(0,-0.3);
 \end{tikzpicture}\Bigg)\\
 &=
 (P_\lambda\Bigg(\begin{tikzpicture}[baseline=0]
  \draw[fill] (0,0.5) circle (.07);
  \draw[fill] (0.5,-0.3) circle (.07);
  \draw[fill] (-0.5,-0.3) circle (.07);
 \end{tikzpicture}\Bigg)
 -
 P_\lambda\Bigg(\begin{tikzpicture}[baseline=0]
  \draw[fill] (0,0.5) circle (.07);
  \draw[fill] (0,-0.3) circle (.07);
 \end{tikzpicture}\Bigg))
 -2(
 P_\lambda\Bigg(\begin{tikzpicture}[baseline=0]
  \draw[fill] (0,0.5) circle (.07);
  \draw[fill] (0,-0.3) circle (.07);
 \end{tikzpicture}\Bigg)
 -
 P_\lambda(\begin{tikzpicture}[baseline=0]
  \draw[fill] (0,0.1) circle (.07);
 \end{tikzpicture}))\\
 &=
 P_\lambda\Bigg(\begin{tikzpicture}[baseline=0]
  \draw[fill] (0,0.5) circle (.07);
  \draw[fill] (0.5,-0.3) circle (.07);
  \draw[fill] (-0.5,-0.3) circle (.07);
 \end{tikzpicture}\Bigg)
 -
 3P_\lambda\Bigg(\begin{tikzpicture}[baseline=0]
  \draw[fill] (0,0.5) circle (.07);
  \draw[fill] (0,-0.3) circle (.07);
 \end{tikzpicture}\Bigg)
 +2
 P_\lambda(\begin{tikzpicture}[baseline=0]
  \draw[fill] (0,0.1) circle (.07);
 \end{tikzpicture})
 \\
 &=\lambda^3-3\lambda^2+2\lambda
 	\end{aligned}
 \end{equation*}
 
On the other hand, according to the principle of inclusion-exclusion, there is another formula for $P_G(\lambda)$. For each $s\subseteq E(G)$, denote $[G: s]$ the subgraph whose vertex is $V(G)$ and edge set is $s$, and let $k(s)$ be the number of connected components of $[G: s]$. Then we have
$$P_{G}(\lambda)=\sum_{s \subseteq E(G)}(-1)^{|s|} \lambda^{k(s)}=\sum_{i \geq 0}(-1)^{i} \sum_{s \subseteq E(G),|s|=i} \lambda^{k(s)}.$$
Note that $\lambda^{k(s)}$ is nothing but the polynomial which counts the ways of colorings of $[G: s]$ such that adjacent vertices have the same color. The formula above plays an important role in the categorification of the chromatic polynomial, see \cite{LY} for more details.
 
\section{Signed graph and signed chromatic polynomials}\label{section3}
\subsection{Signed graph and signed colorings}
First, let's take a quick review of signed graphs. Let $SG=(G,\sigma)$ be a signed graph on an ordinary graph $G=(V(G),E(G))$ with a sign on each edge $\sigma:\,E(G)\to \set{1,-1}$. For each subgraph of $SG$, we call it \emph{unbalanced} if there is a negative circuit, on which the product of edges' signs is negative. Otherwise we say it is \emph{balanced}. Let us use $b(SG)$ to denote the number of balanced components of $SG$. The following is an example.
\begin{ex}
For the signed graph in Figure \ref{fig:1}, there are only 2 negative circuits, say, $v_1v_3v_4v_1$ and $v_1v_3v_4v_2v_1$. Both of them come from the left component. If follows that the left component is unbalanced and the right one is balanced, hence we have $b(SG)=1$.
\end{ex}
 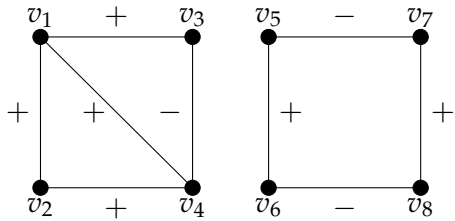
\begin{figure}[htb]
	\centering
	\begin{tikzpicture}
		\draw[fill] (0,0) circle (.1) node [above] {$v_1$};
		\draw[fill] (0,-2) circle (.1) node [below] {$v_2$};
		\draw[fill] (2,0) circle (.1) node [above] {$v_3$};
		\draw[fill] (2,-2) circle (.1) node [below] {$v_4$};

		\draw[fill] (3,0) circle (.1) node [above] {$v_5$};
		\draw[fill] (3,-2) circle (.1) node [below] {$v_6$};
		\draw[fill] (5,0) circle (.1) node [above] {$v_7$};
		\draw[fill] (5,-2) circle (.1) node [below] {$v_8$};

		\draw (0,0) -- node [left] {$+$} (0,-2);
		\draw (0,0) -- node [above] {$+$} (2,0);
		\draw (0,0) -- node [left] {$+$} (2,-2);
		\draw (0,-2) -- node [below] {$+$} (2,-2);
		\draw (2,0) -- node [left] {$-$} (2,-2);

		\draw (3,0) -- node [right] {$+$} (3,-2);
		\draw (3,0) -- node [above] {$-$} (5,0);
		\draw (3,-2) -- node [below] {$-$} (5,-2);
		\draw (5,0) -- node [right] {$+$} (5,-2);
	\end{tikzpicture}
	\caption{A signed graph with one balanced component and one unbalanced component}
	\label{fig:1}
\end{figure}

In \cite{T}, Zalavsky introduced the signed coloring on signed graph. Recently, this notion was modified by E. M\'{a}\v{c}ajov\'{a}, A. Raspaud and M. \v{S}koviera \cite{MRS} to make the corresponding chromatic number coincides with the classical chromatic number when all the signs of the signed graph are positive. A signed $\lambda$-coloring is map $k:V(SG)\to K$, where $K=\set{-\mu,\cdots,-1,0,1,\cdots,\mu}$ if $\lambda=2\mu+1$, and $K=\set{-\mu,\cdots,-1,1,\cdots,\mu}$ (which is called \emph{zero-free}), if $\lambda=2\mu$. We call $k$ a \emph{proper $\lambda$-coloring} (or just coloring for short) if for each edge $e=v_1v_2$ (it is possible that $v_1=v_2$), we have $k(v_1)\neq\sigma(e)k(v_2)$. The \emph{signed chromatic number} is defined to be the smallest $\lambda$ such that $SG$ admits a proper $\lambda$-coloring.

Let $SG=(G,\sigma)$ be a signed graph, each function $u:\,V(SG)\to\set{-1,1}$ leads to a switching function on $SG$: for any $e\in E(SG)$ connecting $v_1,v_2\in V(SG)$, $\sigma(e)$ will be changed into $u(v_1)\sigma(e)u(v_2)$. In particular, if the edge $e$ is a loop, then the sign $\sigma(e)$ is preserved. If a switching function sends a vertex $v$ to $-1$ and all other vertices to $1$, we call this operation a \emph{vertex switching} on $v$ and denote the new signed graph obtained by $v(SG)$. Two signed graphs are called \emph{equivalent} if they are related by finitely many vertex switchings. Note that vertex switching preserves the sign of any circuit, hence the number of balanced components is invariant under vertex switching. On the other hand, it is obvious that if we are given a proper coloring of $SG$, after applying the vertex switching on some vertex $v\in V(SG)$, the coloring obtained from the original coloring by reversing the sign of $k(v)$ is a proper coloring for $v(SG)$. It follows that for any given $\lambda$, equivalent signed graphs have the same number of proper colorings. In particular, if we are given a balanced signed graph, since a balanced signed graph is equivalent to the corresponding positive graph, it suffices to consider the proper colorings on the unsigned graph.

\subsection{Signed chromatic polynomials}
Similar to the ordinary graph, the number of proper signed colorings with $\lambda$ (odd or even) colors is a polynomial respect to $\lambda$, we call it the \emph{signed chromatic polynomial} and use $P_{SG}(\lambda)$ to denote it. In particular, if $\lambda=2\mu +1\,(\mu\in\mathbb{Z})$, we call it the \emph{chromatic polynomial}, otherwise we call it the \emph{balanced chromatic polynomial}, as suggested in \cite{T}.

As we mentioned before, a switching function does not change the number of proper colorings, hence it also preserves the signed chromatic polynomial. Similar to the classical chromatic polynomial, we have the following deletion-contraction relation.

\begin{prop}\label{proposition3.1}
Let $SG$ be a signed graph and $e$ a positive edge. We use $SG-e$ to denote the signed graph obtained from $SG$ by deleting $e$, and use $SG/e$ to denote the signed graph obtained from $SG$ by contracting $e$. The deletion-contraction relation of the signed graph $SG$ with respect to $e$ reads
$$P_{SG}(\lambda)=P_{SG-e}(\lambda)-P_{SG/e}(\lambda).$$
\end{prop}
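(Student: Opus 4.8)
The plan is to mimic the classical deletion--contraction argument by partitioning the proper colorings of $SG-e$ according to their behaviour on the two endpoints of $e$, using crucially that $e$ is positive. Write $e=v_1v_2$; since $\sigma(e)=1$, the condition imposed by $e$ on a coloring reads $k(v_1)\neq k(v_2)$, exactly as in the unsigned case. First I would note that a proper $\lambda$-coloring of $SG-e$ is simply a map $k:V(SG)\to K$ satisfying $k(w_1)\neq\sigma(f)k(w_2)$ for every edge $f=w_1w_2$ of $SG$ other than $e$. These colorings split into two disjoint classes: those with $k(v_1)\neq k(v_2)$ and those with $k(v_1)=k(v_2)$.

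The colorings in the first class are precisely the proper $\lambda$-colorings of $SG$, since restoring the positive edge $e$ imposes exactly the extra condition $k(v_1)\neq k(v_2)$ and nothing more. Hence the first class is counted by $P_{SG}(\lambda)$.

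For the second class I would set up a bijection with the proper $\lambda$-colorings of $SG/e$. Contracting the positive edge $e$ identifies $v_1$ and $v_2$ into a single vertex $v$ and leaves the signs of all remaining edges unchanged. Given a proper coloring $k$ of $SG-e$ with $k(v_1)=k(v_2)=c$, define $\bar k$ on $SG/e$ by $\bar k(v)=c$ and $\bar k(w)=k(w)$ otherwise; conversely every coloring of $SG/e$ pulls back to such a $k$. The point to verify is that $k$ is proper for $SG-e$ precisely when $\bar k$ is proper for $SG/e$. For an edge $f=v_1w$ (resp. $v_2w$) of sign $\sigma(f)$, the constraint $c\neq\sigma(f)k(w)$ is literally the same before and after contraction, since $f$ retains its sign and becomes the edge $vw$ with that sign. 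An edge parallel to $e$, say $g=v_1v_2$ of sign $\sigma(g)$, becomes a loop at $v$ of sign $\sigma(g)$, and the conditions $c\neq\sigma(g)c$ and $\bar k(v)\neq\sigma(g)\bar k(v)$ again coincide. This establishes the bijection, so the second class is counted by $P_{SG/e}(\lambda)$.

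Combining the two counts yields $P_{SG-e}(\lambda)=P_{SG}(\lambda)+P_{SG/e}(\lambda)$, which rearranges to the claimed relation. I would stress that positivity of $e$ is used twice and is essential: it makes the coloring constraint the plain inequality $k(v_1)\neq k(v_2)$, and it lets contraction proceed without any vertex switching, so that all remaining signs---including those of newly created loops and parallel edges---are carried over verbatim. The main obstacle is exactly this bookkeeping in the contraction step, namely checking that the constraints match for parallel edges and loops; once that is settled, the argument is uniform in $\lambda$ and therefore applies simultaneously to the chromatic polynomial ($\lambda$ odd) and the balanced chromatic polynomial ($\lambda$ even), since it never appeals to the internal structure of the color set $K$.
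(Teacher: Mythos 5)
Your proof is correct and complete. Note that the paper itself states Proposition \ref{proposition3.1} without any proof, so there is nothing to compare against; your argument is the standard deletion--contraction bijection, which is surely what the authors had in mind, and you correctly isolate the two places where positivity of $e$ matters (the edge constraint reduces to $k(v_1)\neq k(v_2)$, and contraction transports all remaining signs, including those of edges parallel to $e$ that become loops, without any switching). The only cosmetic omission is the degenerate case where $e$ is a positive loop (which the paper explicitly permits): there your first class is empty and $SG/e=SG-e$, so the identity reduces to $P_{SG}(\lambda)=0$, consistent with the remark following the proposition.
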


Notice that the positive edge $e$ above could be a loop, in which case $P_{SG}(\lambda)=0$. If $e$ is a negative edge but not a loop, we can take vertex switching on one of its endpoint first and then apply the deletion-contraction relation. 

Let us use $SN_m^n$ $(m\geq n)$ to denote the graph with $m$ vertices and $n$ negative loops, where each negative loop joins one vertex to itself. The signed chromatic polynomial of $SN_m^n$ can be calculated directly
\begin{equation*}
 P_{SN_m^n}(\lambda)=
\begin{cases}
 	{\lambda^{m-n}(\lambda-1)^{n}}, & {\lambda\text{ is odd; }}\\
 	{\lambda^m}, & {\lambda\text{ is even.}}\\
\end{cases}
\end{equation*}

By using the deletion-contraction relation and the signed chromatic polynomial of $SN_m^n$, we can calculate the signed chromatic polynomial of any signed graph, and verify that both the chromatic polynomial and the balanced chromatic polynomial are well defined as polynomials. We need to point out that the signed chromatic polynomial, as a joint name for both, is not always a polynomial strictly. As an example, let us calculate the signed chromatic polynomial of $SP_3=
\begin{tikzpicture}[baseline=0]
	\draw[fill] (0,0.5) circle (.07);
  	\draw[fill] (0.5,-0.3) circle (.07);
  	\draw[fill] (-0.5,-0.3) circle (.07);
  	\draw (0,0.5)--node[right]{$+$}(0.5,-0.3);
  	\draw (0,0.5)--node[left]{$+$}(-0.5,-0.3);
  	\draw (-0.5,-0.3)--node[below]{$-$}(0.5,-0.3);
\end{tikzpicture}$.
\begin{ex}
 	\begin{equation*}
 		\begin{aligned}
 			P_\lambda(SP_3)&=
P_\lambda\Bigg(\begin{tikzpicture}[baseline=0]
	\draw[fill] (0,0.5) circle (.07);
  	\draw[fill] (0.5,-0.3) circle (.07);
  	\draw[fill] (-0.5,-0.3) circle (.07);
  	\draw (0,0.5)--node[right]{$+$}(0.5,-0.3);
  	\draw (0,0.5)--node[left]{$+$}(-0.5,-0.3);
\end{tikzpicture}\Bigg)
-
P_\lambda\Bigg(\begin{tikzpicture}[baseline=0]
	\draw[fill] (0,0.5) circle (.07);
  	\draw[fill] (0,-0.3) circle (.07);
  	\draw (0,0.5)..controls(0.2,0.1)..node[right]{$-$}(0,-0.3);
  	\draw (0,0.5)..controls(-0.2,0.1)..node[left]{$+$}(0,-0.3);
\end{tikzpicture}\Bigg)
\\
&=
P_\lambda\Bigg(\begin{tikzpicture}[baseline=0]
	\draw[fill] (0,0.5) circle (.07);
  	\draw[fill] (0.5,-0.3) circle (.07);
  	\draw[fill] (-0.5,-0.3) circle (.07);
  	\draw (0,0.5)--node[right]{$+$}(0.5,-0.3);
  	\draw (0,0.5)--node[left]{$+$}(-0.5,-0.3);
\end{tikzpicture}\Bigg)
-
P_\lambda\Bigg(\begin{tikzpicture}[baseline=0]
	\draw[fill] (0,0.5) circle (.07);
  	\draw[fill] (0,-0.3) circle (.07);
  	\draw (0,0.5)--node[left]{$+$}(0,-0.3);
\end{tikzpicture}\Bigg)
+
P_\lambda\Bigg(\begin{tikzpicture}[baseline=0]
	\draw[fill] (0,0.1) circle (.07);
	\draw (-0.3,0.1) circle (.3);
	\node at (-0.6,0.2) [left] {$-$};
\end{tikzpicture}\Bigg)
\\
&=\lambda(\lambda-1)^2-\lambda(\lambda-1)+
P_\lambda\Bigg(\begin{tikzpicture}[baseline=0]
	\draw[fill] (0,0.1) circle (.07);
	\draw (-0.3,0.1) circle (.3);
	\node at (-0.6,0.2) [left] {$-$};
\end{tikzpicture}\Bigg)
\\
&=
\begin{cases}
	\lambda^3-3\lambda^2+3\lambda-1,&\lambda \text{ is odd;}\\
	\lambda^3-3\lambda^2+3\lambda,&\lambda \text{ is even.}\\
\end{cases}
 		\end{aligned}
 	\end{equation*}
\label{ex3.2}
\end{ex}

Similar to the unsigned graph, there is another formula for the signed chromatic polynomial deduced from the principle of inclusion-exclusion.

\begin{prop}\label{prop3.4}
Let $SG$ be a signed graph on $G$, we define $Q_{SG}(\lambda)$ to be the polynomial which calculates the ways to coloring $SG$ such that for any $e\in E(G)$ connecting $v_1,\,v_2\in V(G)$, it satisfies $k(v_1)=\sigma(e)k(v_2).$
Then we have 
\begin{equation*}
Q_{SG}(\lambda)=
\begin{cases}
{\lambda^{b(SG)}}, & {\lambda\text{ is odd or $SG$ is balanced;}} \\
0 , & {\text{else.}}\\
\end{cases}
\end{equation*}
\end{prop}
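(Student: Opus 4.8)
The plan is to compute $Q_{SG}(\lambda)$ one connected component at a time. Since the defining constraint $k(v_1)=\sigma(e)k(v_2)$ only couples vertices lying in the same component of $G$, the colorings of distinct components are independent, and hence $Q_{SG}(\lambda)=\prod_{i}Q_{C_i}(\lambda)$, where $C_1,\dots,C_r$ are the connected components of $SG$. It therefore suffices to evaluate $Q_C(\lambda)$ for a connected signed graph $C$ and then take the product.

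For a connected $C$ I would fix a spanning tree $T$ and a base vertex $v_0$. Because $\sigma(e)^2=1$, the relation $k(v_1)=\sigma(e)k(v_2)$ is symmetric, so propagating along the tree from $v_0$ determines every other color: for each vertex $w$ one obtains $k(w)=\sigma(P_w)\,k(v_0)$, where $P_w$ is the tree path from $v_0$ to $w$ and $\sigma(P_w)$ is the product of the signs along it. Thus a coloring is completely pinned down by the single value $k(v_0)$, subject only to the constraints coming from the edges outside $T$ (and from loops). Writing the constraint for a non-tree edge $e=v_1v_2$ and substituting the propagated values, it reduces to $(1-\eps_e)\,k(v_0)=0$, where $\eps_e=\sigma(P_{v_1})\sigma(e)\sigma(P_{v_2})$ is exactly the sign of the fundamental circuit of $e$; the same computation with $v_1=v_2$ handles loops (a positive loop imposes nothing, a negative loop forces $k(v_0)=0$, consistent with its being a negative circuit). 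Hence $e$ imposes no condition when its fundamental circuit is balanced ($\eps_e=1$) and forces $k(v_0)=0$ when it is unbalanced ($\eps_e=-1$).

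Now I would split into the two cases. If $C$ is balanced, then every circuit, in particular every fundamental circuit, is positive, so no condition is imposed and $k(v_0)$ may be any element of the color set $K$; since $\abs{K}=\lambda$ in both the odd and even cases, this yields $Q_C(\lambda)=\lambda$. If $C$ is unbalanced, then using the standard fact that a connected signed graph is balanced if and only if all of its fundamental circuits with respect to $T$ are positive, at least one fundamental circuit must be negative, forcing $k(v_0)=0$. When $\lambda$ is odd we have $0\in K$, and the forced value $k(v_0)=0$ propagates to the all-zero coloring, which trivially satisfies every edge relation, so $Q_C(\lambda)=1$; when $\lambda$ is even we have $0\notin K$, so no valid coloring exists and $Q_C(\lambda)=0$.

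Finally I would reassemble the product. With $b=b(SG)$ balanced components, each balanced component contributes a factor $\lambda$, while each unbalanced component contributes $1$ (when $\lambda$ is odd) or $0$ (when $\lambda$ is even). This gives $Q_{SG}(\lambda)=\lambda^{b}$ whenever $\lambda$ is odd or $SG$ is balanced, and $Q_{SG}(\lambda)=0$ otherwise, which is the claimed formula. The only genuinely substantive step is the reduction of the non-tree-edge constraint to the sign of the fundamental circuit, combined with the remark that balancedness is detected by the fundamental circuits; the rest is bookkeeping across components and a check of which colors are available for each parity of $\lambda$.
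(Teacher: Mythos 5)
Your proof is correct and follows essentially the same route as the paper's: both arguments observe that on each connected component the coloring is determined by the color of a single vertex, that a balanced component leaves $\lambda$ free choices, and that an unbalanced component forces the color $0$, which exists only for odd $\lambda$. The only difference is that you justify the key claim (unbalanced forces $0$) carefully via a spanning tree and the signs of fundamental circuits, whereas the paper simply asserts it; your version is a fuller account of the same idea.
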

\begin{pr}
For any coloring satisfies the condition above, the coloring of one component is completely determined by the color on one vertex. If the component is balanced then there are $\lambda$ colors to choose. However, if the component is unbalanced, we can only assign $0$ to it. In other words, we can not color it when $\lambda$ is even and there is only one way to color it when $\lambda$ is odd. Then we obtain
$$Q_{SG}(\lambda)=
\begin{cases}
{\lambda^{b(SG)}}, & {\lambda\text{ is odd ;}} \\
{\lambda^{b(SG)}}\cdot 0^{n-b(SG)}, & {\lambda\text{is even.}}\\
\end{cases}$$
where $n$ is the number of components of $SG$ and as usual we set $0^0=1$. The proof is finished.
\end{pr}

\begin{prop}
According to the principle of inclusion-exclusion, we have
$$P_{SG}(\lambda)=\sum_{i \geq 0}(-1)^{i} \sum_{s\subseteq E(G),|s|=i} Q_{[SG: s]}(\lambda),$$
where $[SG: s]$ is the signed subgraph on $[G: s]$.
\end{prop}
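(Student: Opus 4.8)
The plan is to run the principle of inclusion--exclusion over the edge set $E(G)$, exactly in the spirit of the unsigned formula recalled at the end of Section~\ref{section2}, taking care that the signs $\sigma(e)$ enter the ``bad'' events. First I would fix $\lambda$ and work inside the set $\mathrm{Col}(\lambda)$ of all (not necessarily proper) maps $k\colon V(G)\to K$, where $K$ is the color set of size $\lambda$; there are $\lambda^{|V(G)|}$ of these. For each edge $e=v_1v_2$ I would introduce the set
$$A_e=\set{k\in\mathrm{Col}(\lambda)\ :\ k(v_1)=\sigma(e)k(v_2)},$$
i.e. the colorings that \emph{fail} to be proper at $e$. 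By the defining condition $k(v_1)\neq\sigma(e)k(v_2)$ for a proper coloring, a map is proper precisely when it lies outside every $A_e$, so that $P_{SG}(\lambda)=\abs{\mathrm{Col}(\lambda)\setminus\bigcup_{e\in E(G)}A_e}$.

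Then I would apply inclusion--exclusion in its standard form
$$\abs{\,\mathrm{Col}(\lambda)\setminus\bigcup_{e}A_e\,}=\sum_{s\subseteq E(G)}(-1)^{|s|}\,\Bigl|\bigcap_{e\in s}A_e\Bigr|,$$
where the empty intersection is read as all of $\mathrm{Col}(\lambda)$. The key identification is that $\bigcap_{e\in s}A_e$ is, by definition, exactly the set of colorings $k$ satisfying $k(v_1)=\sigma(e)k(v_2)$ for every edge $e\in s$. Since $[SG:s]$ has vertex set $V(G)$ and edge set $s$, this is precisely the collection counted by $Q_{[SG:s]}(\lambda)$ from Proposition~\ref{prop3.4}. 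Substituting $\abs{\bigcap_{e\in s}A_e}=Q_{[SG:s]}(\lambda)$ and grouping the subsets $s$ by their cardinality $i=|s|$ then yields the claimed formula.

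The one point that needs care --- and where the signed setting genuinely differs from the unsigned one --- is verifying that $\bigcap_{e\in s}A_e$ is correctly counted by $Q_{[SG:s]}$ regardless of the parity of $\lambda$. For an unbalanced component of $[SG:s]$, propagating the constraints $k(v_1)=\sigma(e)k(v_2)$ around a negative circuit forces $k(v)=-k(v)$ at some vertex, which is solvable only by the color $0$, present exactly when $\lambda$ is odd; a negative loop forces the same, while a positive loop imposes no constraint. This is precisely the content already established in the proof of Proposition~\ref{prop3.4}, so rather than reprove it I would simply invoke that result: the parity-dependence and the vanishing on unbalanced components are baked into the definition of $Q$, and hence pass transparently through the inclusion--exclusion. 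Accordingly I do not anticipate any serious obstacle beyond this bookkeeping; the substantive work lies entirely in Proposition~\ref{prop3.4}, and the present statement is then a formal consequence of it together with inclusion--exclusion.
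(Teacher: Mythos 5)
Your proposal is correct and is exactly the argument the paper intends: the paper's own proof is the one-line remark that the formula follows ``directly by applying the principle of inclusion-exclusion together with the definition of $Q_{SG}(\lambda)$,'' and your write-up simply makes that explicit by identifying $\bigcap_{e\in s}A_e$ with the set counted by $Q_{[SG:s]}(\lambda)$ and deferring the parity/balance bookkeeping to Proposition~\ref{prop3.4}, just as the paper does. No discrepancy to report.
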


\begin{pr}
We obtain this directly by applying the principle of inclusion-exclusion together with the definition of $Q_{SG}(\lambda)$.
\end{pr}

\section{A categorification for the signed chromatic polynomial}\label{section4}
As we mentioned above, the signed chromatic polynomial itself is not always a polynomial. However, the chromatic polynomial and the balanced chromatic polynomial are both well defined as polynomials. So in this section, we give the categorifications for these two polynomials respectively. We first recall the definition and some properties of the graded dimension (also called quantum dimension) of a graded $\mathbb{Z}$-module.
\subsection{Graded dimension of graded modules}
\begin{defn}
Let $\mathcal{M}=\bigoplus\limits_j M_j$ be a graded $\mathbb{Z}$ -module, where $M_j$ denotes the set of homogeneous elements with degree $j$. The \emph{graded dimension} (also called \emph{quantum dimension}) of $\mathcal{M}$ is the power series
	$$
	q \text{dim} \mathcal{M}:=\sum_{j} q^{j} \cdot \text{rank}\left(M_{j}\right),
	$$
where $\text{rank}\left(M_{j}\right)=\text{dim}_{\mathbb{Q}}\left(M_{j} \otimes \mathbb{Q}\right)$.
\end{defn}

We remark that the torsion part of $\mathcal{M}$ can not be detected by the graded dimension. Let $\mathcal{M}$ and $\mathcal{N}$ be two graded $\mathbb{Z}$-modules. Then $\mathcal{M} \oplus \mathcal{N}$ and $\mathcal{M} \otimes \mathcal{N}$ are both graded $\mathbb{Z}$ -modules and the graded dimensions can be obtained as below
$$q\text{dim}(\mathcal{M} \oplus \mathcal{N})=q \text{dim}(\mathcal{M})+q \text{dim}(\mathcal{N}), q\text{dim}(\mathcal{M} \otimes \mathcal{N})=q \text{dim}(\mathcal{M}) \cdot q \text{dim}(\mathcal{N}).$$

The following example of graded $\mathbb{Z}$-module, which is taken from \cite{LY}, will be frequently used throughout this paper. It was originally used in the construction of Khovanov homology \cite{MK}.

\begin{ex}
Let $M$ be the graded free $\mathbb{Z}$-module with two basis elements $1$ and $x$, whose degrees are $0$ and $1$ respectively. According to the definition of the graded dimension, we have
$$q \text{dim}(M)=q^0\cdot \text{rank}(\mathbb{Z})+q^1\cdot\text{rank}(\mathbb{Z}x)=1+q.$$
On the other hand, by using the left identity above, as $M=\mathbb{Z} \oplus \mathbb{Z} x$, we obtain the same result
$$q \text{dim}(M)=q\text{dim}(\mathbb{Z})+q\text{dim}(\mathbb{Z}x)=1+q.$$
Additionally, the tensor product formula above tells us that $q\text{dim}M^{\otimes k}=(1+q)^k$.
\end{ex}
\begin{defn}
Let $\set{\ell}$ be the ``degree shift'' operation on graded $\mathbb{Z}$-modules. That is, if $\mathcal{M}=\oplus_{j} M_{j}$ is a graded $\mathbb{Z}$ -module where $M_{j}$ denotes the set of elements of $\mathcal{M}$ of degree $j,$ we set $\mathcal{M}\{\ell\}_{j}:=M_{j-\ell}$ so that $q \text{dim}\mathcal{M}\{\ell\}=$$q^{\ell} \cdot q \text{dim} \mathcal{M}$. In other words, all the degrees are increased by $\ell$.
\end{defn}
For example, since $\text{deg}x=1$, then $\mathbb{Z}\set{1}=\mathbb{Z}x$. And for each $\ell\in\mathbb{N}$, it's easy to check that $\mathcal{M} \otimes \mathbb{Z}\{\ell\} \cong \mathcal{M}\{\ell\}$, the $\mathbb{Z}$-module isomorphic to $\mathcal{M}$ with degree of every homogeneous element raised up by $\ell$.

\subsection{A categorification for the chromatic polynomial}
In this section, we give a categorification for the chromatic polynomial, i.e. the signed chromatic polynomial with odd $\lambda$. In order to do this, we need to introduce a sequence of graded $\mathbb{Z}$-modules and graded differentials.
\subsubsection{Cochain groups on signed graphs}
Let $SG$ be a signed graph on $G$, for each signed spanning graph $[SG: s]$ led by $s\subseteq E(SG)$, we assign a graded $\mathbb{Z}$-module $M_s(SG)$ as follows: we assign a copy of $M$ to each balanced component and a copy of $\mathbb{Z}$ to the unbalanced component and then take the tensor product. In this way, it is guaranteed that $q\text{dim} M_{s}(SG)=Q_{[G: s]}(1+q)$ if $q=\lambda-1\in2\mathbb{N}$, which further ensures the following result.
\begin{prop}\label{Theorem4.4}
Let $SG$ be a signed graph. For each signed spanning graph $[SG: s]$ led by $s\subseteq E(SG)$, we define the cochain group $C^i(SG)=\mathop\bigoplus\limits_{s\subseteq E(SG),|s|=i}M_s(SG)$, then $P_{SG}(1+q)=\sum\limits_{i\geq 0}(-1)^i q\text{dim} C^i(SG)$, provided that $q\in2\mathbb{N}$.
\end{prop}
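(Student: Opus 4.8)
The plan is to reduce the statement to the two formal properties of the graded dimension (additivity over direct sums, multiplicativity over tensor products) together with the inclusion--exclusion formula for $P_{SG}$ and Proposition \ref{prop3.4}. The whole argument is a chain of identities, so the work is in making the parity hypothesis $q\in2\mathbb{N}$ do exactly the right thing.

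First I would unwind the left-hand side using additivity of $q\text{dim}$ over $C^i(SG)=\bigoplus_{s\subseteq E(SG),|s|=i}M_s(SG)$, giving
\[
\sum_{i\geq 0}(-1)^i\,q\text{dim}\,C^i(SG)=\sum_{i\geq 0}(-1)^i\sum_{s\subseteq E(SG),|s|=i}q\text{dim}\,M_s(SG),
\]
which reduces the claim to understanding a single $q\text{dim}\,M_s(SG)$.

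The key step is the identity $q\text{dim}\,M_s(SG)=Q_{[SG:s]}(1+q)$ for $q\in2\mathbb{N}$, which is also the content of the remark preceding the proposition. Since $M_s(SG)$ is, by construction, the tensor product of one copy of $M$ for each balanced component and one copy of $\mathbb{Z}$ for each unbalanced component of $[SG:s]$, multiplicativity of the graded dimension combined with $q\text{dim}\,M=1+q$ and $q\text{dim}\,\mathbb{Z}=1$ yields $q\text{dim}\,M_s(SG)=(1+q)^{b([SG:s])}$, where $b$ counts balanced components. On the other hand, for odd $\lambda=1+q$ (equivalently $q\in2\mathbb{N}$), Proposition \ref{prop3.4} gives $Q_{[SG:s]}(\lambda)=\lambda^{b([SG:s])}$; evaluating at $\lambda=1+q$ matches the graded dimension exactly. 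Substituting this identity into the display and then invoking the inclusion--exclusion formula $P_{SG}(\lambda)=\sum_{i\geq0}(-1)^i\sum_{|s|=i}Q_{[SG:s]}(\lambda)$ at $\lambda=1+q$ closes the argument.

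I expect the only genuinely delicate point to be the parity hypothesis $q\in2\mathbb{N}$, and it is worth flagging explicitly why the construction is tuned to it. For odd $\lambda$ an unbalanced component still admits exactly one admissible coloring (the determining vertex must be assigned $0$), which is precisely what the factor $q\text{dim}\,\mathbb{Z}=1$ records; for even $\lambda$, Proposition \ref{prop3.4} forces $Q_{[SG:s]}$ to vanish as soon as an unbalanced component is present, and this vanishing cannot be reproduced by the nonzero factor $q\text{dim}\,\mathbb{Z}=1$. Thus the assignment $M_s(SG)$ categorifies $Q$, and hence $P_{SG}$, only in the odd case, which is exactly why the even (balanced) situation is handled by a separate construction later in the paper.
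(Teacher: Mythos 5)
Your argument is correct and is exactly the one the paper intends: the paper gives no separate proof, treating the proposition as an immediate consequence of the observation $q\text{dim}\,M_s(SG)=Q_{[SG:s]}(1+q)$ for $q\in2\mathbb{N}$ (via additivity and multiplicativity of the graded dimension together with Proposition \ref{prop3.4}) combined with the inclusion--exclusion formula for $P_{SG}$. Your explicit discussion of why the parity hypothesis is needed is a welcome elaboration but does not change the route.
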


Now we have a cochain group $C^{\bullet}(SG)$ whose graded Euler characteristic is equal to the chromatic polynomial with $\lambda=1+q$, where $\lambda$ is odd and $q$ is even. The next step is to introduce a differential $d_s$ for the chain complex which satisfies $d_s^2=0$.
  
\subsubsection{The differential}
We first recall the definition of enhanced state of an unsigned graph \cite{LY}. Let $G=(V(G),E(G))$ be a graph with an ordering on $E(G)$. An \emph{enhanced state} $S=\{s, c\}$ consists of a subset $s\subset E(G)$ and an assignment $c$ which assigns $1$ or $x$ to each component of $[G: s]$. For each enhanced state $S$, we set $i(S)=|s|$ and $j(S)$ to be the number of $x$ in $c$. 

We define a multiplication $m: M\otimes M\to M$ by $m(1\otimes1)=1$, $m(1\otimes x)=m(x\otimes 1)=x$ and $m(x\otimes x)=0$. We remark that actually there is a Frobenius algebra structure on $M$ \cite{MK}. However, here the only operation we need is the multiplication, since adding an edge never increases the component number.

We set $C^{i, j}(G)=\text{span}\langle S|S\text{ is an enhanced state of }G \text{ with } i(S)=i, j(S)=j\rangle$, where the span is taken over $\mathbb{Z}$. The differential $d: C^{i, j}(G)\to C^{i+1,j}(G)$ is defined as $$d: S=(s, c)\to\sum_{e\in E(G)-s}(-1)^{n(e)}S_e,$$ where $n(e)$ is the number of edges in $s$ that are ordered before $e$. Here $S_e$ denotes either an enhanced state or 0, which is defined as follows
\begin{itemize}
\item if $e$ connects a component $E_i$ to itself, then the components of $[G: s\cup\{e\}]$ are $E_{1}, \cdots, E_{i}\cup\{e\}, \cdots, E_{k(s)}.$ We define $c_{e}(E_1)=c(E_1), \cdots, c_{e}(E_{i}\cup\{e\})=c(E_{i}), \cdots, c_{e}(E_{k(s)})=c(E_{k(s)})$.
\item If $e$ connects two components $E_i$ and $E_j$ $(i<j)$, then the components of $[G: s\cup\{e\}]$ are $E_1, \cdots, E_{i-1}, E_i\cup E_j\cup\{e\}, E_{i+1}, \cdots, E_{j-1}, E_{j+1}, \cdots, E_{k(s)}$. We define $c_e(E_i\cup E_j \cup\{e\})=m(c(E_i)\otimes c(E_j))$ and $c_{e}(E_l)=c(E_l)$ if $l\neq i, j$.
\end{itemize}
In particular, if $c(E_i)=c(E_j)=x$, then $c_{e}(E_i\cup E_j\cup\{e\})=m(x\otimes x)=0$. In this case, we set $S_e=0$. Otherwise, $c_e$ is a coloring and we define $S_e$ to be the enhanced state $(s_e=s\cup\{e\}, c_e)$. It was proven in \cite{LY} that $d^2=0$. Therefore $C^{\bullet}(G)=\{\bigoplus\limits_{j\geq 0}C^{i, j}(G), d\}$ is a graded cochain complex with graded Euler characteristic the chromatic polynomial $P_G(\lambda)$ evaluated at $\lambda=1+q$. The corresponding cohomology groups $H^{\bullet}(G)$ are called the \emph{chromatic cohomology groups} of $G$.

Now we turn to the enhanced states for the chromatic polynomial of signed graphs. As before, for a given signed graph $SG$, we use $G$ to denote the corresponding unsigned graph.
\begin{defn}
An enhanced state $S=(s, c)$ of $G$ is an \emph{enhanced state} of $SG$ with respect to chromatic polynomial if and only if $c$ assigns $1$ for all unbalanced components of $[G: s]$. Now we can define $C^{i, j}(SG)$ as follows
\begin{center}
$C^{i, j}(SG)=\text{span}\langle S|S\text{ is an enhanced state of $SG$ with }i(S)=i, j(S)=j \rangle,$
\end{center}
where $i(S)=|s|$ and $j(S)=$ the number of components that  $x$ is assigned to.
\end{defn}
It follows immediately that $C^{i}(SG)=\bigoplus\limits_{j\geq 0}C^{i, j}(SG).$ In order to define the differential $d_s$, we need to define a map $f$ from $C^{i, j}(G)$ to $ C^{i, j}(SG)$.
\begin{defn}\label{definition4.5}
Let $SG$ be a signed graph on $G$, for each enhanced state $S=(s, c)$ in $C^{i, j}(G)$, we define
\begin{equation*}
    	f(S)=
		\begin{cases}
			S, &{\text{if } c \text{ assigns 1 to all unbalanced components}}\\
			0, &{\text{otherwise}}\\
		\end{cases}
  	\end{equation*}
\end{defn}
It's obvious that $f$ extends to a linear map from $C^{i, j}(G)$ onto $ C^{i, j}(SG)$, as well as $C^i(G)$ onto $ C^i(SG)$. We will use $f$ to denote both of them, if there is no confusion. By using $f$, we can define the differential $d_s$ according to the following commutative diagram.
\begin{equation*}
  	\xymatrix{
	C^n(G) \ar[rr]^{d}\ar[d]_{f} &   & C^{n+1}(G) \ar[d]_{f} \\
       C^n(SG)\ar[rr]^{d_s}           &   & C^{n+1}(SG)}
\end{equation*}
\begin{prop}\label{prop4.10}
The differential $d_s: C^n(SG)\to C^{n+1}(SG),\text{ defined by } d_s=f\circ d\circ f^{-1}$ is well defined.
\end{prop}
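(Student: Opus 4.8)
The plan is to reduce well-definedness of $d_s=f\circ d\circ f^{-1}$ to a single stability property of the differential $d$. Since $f\colon C^n(G)\to C^n(SG)$ is the surjective linear map that fixes every enhanced state assigning $1$ to all unbalanced components and annihilates every state assigning $x$ to some unbalanced component, its kernel $K:=\ker f$ is precisely the $\mathbb{Z}$-span of the latter ``bad'' states, and $C^n(G)=C^n(SG)\oplus K$ with $f$ the projection onto the first summand. For $S\in C^n(SG)$ the symbol $f^{-1}(S)$ denotes any preimage, so $d_s(S):=f(d(y))$ for some $y$ with $f(y)=S$; this is independent of the choice of $y$ exactly when $f\circ d$ vanishes on $K$, equivalently when $d(K)\subseteq K$. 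Thus the whole proposition comes down to showing that $d$ carries bad states to $\mathbb{Z}$-combinations of bad states.

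To establish $d(K)\subseteq K$ it suffices to check it on a basis, so I would fix a bad enhanced state $S=(s,c)$ together with an unbalanced component $E_{i_0}$ of $[G\colon s]$ with $c(E_{i_0})=x$, and track this component through $d(S)=\sum_{e\in E(G)-s}(-1)^{n(e)}S_e$. For each edge $e$ one analyses the cases in the definition of $S_e$: if $e$ misses $E_{i_0}$ (a loop on another component, or an edge joining two components both different from $E_{i_0}$) then $E_{i_0}$ survives untouched; if $e$ is a loop at $E_{i_0}$ the new component $E_{i_0}\cup\{e\}$ inherits the color $c(E_{i_0})=x$; and if $e$ joins $E_{i_0}$ to some $E_j$ the merged component gets color $m(x\otimes c(E_j))$, which equals $x$ when $c(E_j)=1$ and equals $0$ (so $S_e=0$) when $c(E_j)=x$. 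Hence in every case with $S_e\neq 0$, the component of $[G\colon s\cup\{e\}]$ containing the image of $E_{i_0}$ is colored $x$.

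The main point, which I expect to be the real content rather than bookkeeping, is that this distinguished component is still \emph{unbalanced}. Adding an edge to the underlying graph can only enlarge a component, either by attaching a loop or edge inside it or by merging it with another component; in none of these operations is the negative circuit of $SG$ witnessing the unbalance of $E_{i_0}$ destroyed. Therefore the component of $[SG\colon s\cup\{e\}]$ containing $E_{i_0}$ remains unbalanced, and being colored $x$ it makes $S_e$ again a bad state. Consequently every nonzero $S_e$ lies in $K$, whence $d(S)\in K$ and, by linearity, $d(K)\subseteq K$.

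Assembling these, for any $y,y'\in C^n(G)$ with $f(y)=f(y')$ we have $y-y'\in K$, so $f(d(y))-f(d(y'))=f\bigl(d(y-y')\bigr)=0$ because $d(y-y')\in K=\ker f$; thus $d_s(S)$ does not depend on the chosen preimage and is well defined. I would then remark that the same commutative square yields $d_s^2=f\circ d^2\circ f^{-1}=0$ from $d^2=0$, so that $C^\bullet(SG)$ is genuinely a cochain complex, though only well-definedness is asserted in the present statement.
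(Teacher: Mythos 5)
Your proposal is correct and follows essentially the same route as the paper: both reduce well-definedness to showing that $f\circ d$ annihilates $\ker f$, and both verify this by the same case analysis on how an edge $e$ interacts with a fixed unbalanced component colored $x$ (missing it, looping on it, or merging it with another component), using that adding an edge cannot destroy unbalance. Your reformulation as the stability $d(\ker f)\subseteq\ker f$ is just a restatement of the paper's criterion, so there is no substantive difference.
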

\begin{proof}
Because both $f, d$ are linear functions, it suffices to prove that for each $z\in C^n(G)$ with $f(z)=0$, we have $f\circ d(z)=0$. Recall that $C^n(G)=\mathop\bigoplus\limits_{s\subseteq E(G),|s|=n}M_s(G)$, where $M_s(G)=M^{\otimes k(s)}$, we suppose $z=\sum\limits_{i=1}^ma_iS_i$ $(a_i\neq0)$, where $S_i=(s_i, c_i)$ $(1\leq i\leq m)$ are distinct enhanced states. Since $f(z)=0$, it follows that for each $1\leq i\leq m$, there exists at least one unbalanced component of $[G: s_i]$ which is assigned an $x$. For $S_1$, let us choose an unbalanced component $E_1$ with $c_1(E_1)=x$. We break down into three cases.
\begin{enumerate}
\item If an edge $e\in E(G)-s_1$ is not adjacent to $E_1$, the the corresponding assignment of $(S_1)_e$ also assigns an $x$ to $E_1$. Hence $(S_1)_e$ is also sent to 0 according to the definition of $f$.
\item If an edge $e\in E(G)-s_1$ connects $E_1$ with another component $E_2$, then the new component $E_1\cup E_2\cup \{e\}$ in $[G: s_1\cup\{e\}]$ is unbalanced. If $c_1(E_2)=1$, then $(c_1)_e(E_1\cup E_2\cup \{e\})=m(x\otimes 1)=x$, hence $f((S_1)_e)=0$. If $c_1(E_2)=x$, then $(c_1)_e(E_1\cup E_2\cup \{e\})=m(x\otimes x)=0$, which is also mapped to 0 under $f$.
\item If an edge $e\in E(G)-s_1$ joins $E_1$ to itself, then the new component $E_1\cup \{e\}$ in $[G: s_1\cup\{e\}]$ is also unbalanced and $(c_1)_e(E_1\cup \{e\})=c_1(E_1)=x$, it follows that $f((S_1)_e)=0$.
\end{enumerate}
For other $S_i$, the proof is analogous to that of $S_1$. In summary, we have $f\circ d(z)=0 ,$ which means that $d_s=f\circ d\circ f^{-1} $ is well defined.
\end{proof}

As $d^2=0$ and $d_s=f\circ d\circ f^{-1}$, we can directly obtain $d_s^2=f\circ d^2\circ f^{-1}=0$, then we obtain the following chain complex
\begin{center}
$C^0(SG)\stackrel{d_s}\To C^1(SG)\stackrel{d_s}\To C^2(SG)\stackrel{d_s}\To\cdots\stackrel{d_s}\To C^i(SG)\stackrel{d_s}\To\cdots$.
\end{center}
\begin{defn}
For a given signed graph $SG$, we call the cohomology groups of the cochain complex above the \emph{chromatic cohomology groups} of $SG$ and use $H^i(SG)$ to denote the $i$-th chromatic cohomology group of $SG$.
\end{defn}

Since $d_s$ is degree preserving, the chromatic cohomology group $H^i(SG)$ can be decomposed as $\bigoplus\limits_{j\geq0}H^{i, j}(SG)$. The chromatic polynomial can be recovered by $P_{SG}(1+q)=\sum\limits_{i\geq 0}(-1)^i q\text{dim}C^i(SG)=\sum\limits_{i\geq 0}(-1)^i q\text{dim}H^i(SG)$. The following proposition tells us that chromatic cohomology groups are well defined signed graph invariants.

\begin{prop}\label{proposition4.8}
The chromatic cohomology groups are independent of the order of the edges.
\end{prop}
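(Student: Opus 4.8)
The plan is to show that changing the edge order induces a (bigraded) chain isomorphism on $C^{\bullet}(SG)$, hence an isomorphism on cohomology. The underlying graded groups $C^{i,j}(SG)$ are spanned by enhanced states and make no reference to the order, so only the differential $d_s$ depends on the chosen order, through the signs $(-1)^{n(e)}$ appearing in $d$. Since any two orderings of $E(G)$ are related by a sequence of transpositions of consecutive edges, and the symmetric group is generated by such transpositions, it suffices to treat the case where two orderings $\mathcal{O}$ and $\mathcal{O}'$ differ only by swapping two edges $e_k$ and $e_{k+1}$ that are adjacent in the order. Write $n_{\mathcal{O}}(e,s)$ for the count used in the differential relative to $\mathcal{O}$, and let $d, d'$ (resp. $d_s, d_s'$) denote the differentials on $C^{\bullet}(G)$ (resp. $C^{\bullet}(SG)$) attached to $\mathcal{O}$ and $\mathcal{O}'$.

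First I would handle the unsigned complex $C^{\bullet}(G)$, following \cite{LY}. Define $\Phi\colon C^{\bullet}(G)\to C^{\bullet}(G)$ on enhanced states by $\Phi(s,c)=(-1)^{u(s)}(s,c)$, where $u(s)=1$ if $\{e_k,e_{k+1}\}\subseteq s$ and $u(s)=0$ otherwise. This $\Phi$ is a grading-preserving involution, hence an isomorphism, and the content is to verify $\Phi\circ d=d'\circ\Phi$. For fixed $(s,c)$ and fixed $e\notin s$ one compares the two sign exponents: $n_{\mathcal{O}'}(e,s)=n_{\mathcal{O}}(e,s)$ unless $e\in\{e_k,e_{k+1}\}$, while replacing $\mathcal{O}$ by $\mathcal{O}'$ makes adding $e_k$ gain $[e_{k+1}\in s]$ and adding $e_{k+1}$ lose $[e_k\in s]$. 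A short case check shows this difference equals $u(s\cup e)-u(s)\pmod 2$ in every case, which is precisely the identity making $\Phi$ a chain map.

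The new step is to transport $\Phi$ to the signed complex. Because $\Phi$ is diagonal in the enhanced-state basis, it preserves the subspace spanned by the states assigning $1$ to all unbalanced components, so it restricts to a grading-preserving isomorphism $\Phi_s\colon C^{\bullet}(SG)\to C^{\bullet}(SG)$ given by the same formula $\Phi_s(s,c)=(-1)^{u(s)}(s,c)$, and one checks on generators that $f\circ\Phi=\Phi_s\circ f$. Combining this with the defining relations $f\circ d=d_s\circ f$ and $f\circ d'=d_s'\circ f$ supplied by Proposition \ref{prop4.10}, together with $\Phi\circ d=d'\circ\Phi$, gives $\Phi_s\circ d_s\circ f=f\circ\Phi\circ d=f\circ d'\circ\Phi=d_s'\circ\Phi_s\circ f$. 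Since $f$ is surjective it may be cancelled on the right, yielding $\Phi_s\circ d_s=d_s'\circ\Phi_s$. Thus $\Phi_s$ is a bigraded chain isomorphism and induces isomorphisms $H^{i,j}(SG)\cong H^{i,j}(SG)$ for the two orderings, which chains together over the reduction to adjacent transpositions.

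I expect the main obstacle to be bookkeeping rather than conceptual: the sign-chase verifying $\Phi\circ d=d'\circ\Phi$ is the only genuine computation, and it is essentially the argument of \cite{LY}. The descent to $C^{\bullet}(SG)$ is then essentially forced, since $\Phi$ and $f$ are both diagonal on enhanced states; the one point to check carefully is that $\Phi$ never mixes a state killed by $f$ with one that survives, which holds because $\Phi$ merely rescales each basis state by $\pm 1$.
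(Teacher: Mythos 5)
Your proposal is correct and is essentially the paper's own argument: the paper reduces to an adjacent transposition and defines exactly your map $\Phi_s$ (denoted $g_s$ there, equal to $-\mathrm{id}$ on summands with $\{e_k,e_{k+1}\}\subseteq s$ and $\mathrm{id}$ otherwise), leaving the chain-map verification as a routine exercise. Your only addition is to carry out that verification explicitly on $C^{\bullet}(G)$ and descend to $C^{\bullet}(SG)$ via the surjection $f$, which is a sound way to organize the omitted check.
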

\begin{proof}
The proof follows the outline given in \cite{LY}. Suppose $E(SG)=\{e_1, \cdots, e_k, e_{k+1}, \cdots, e_n\}$, it suffices to prove that $H^{i}(SG)\cong H^{i}(SG')$, where $SG'$ is the same signed graph as $SG$ but the edges are reordered as $\{e_1, \cdots, e_{k+1}, e_k, \cdots, e_n\}$.

Since $C^i(SG)=\mathop\bigoplus\limits_{s\subseteq E(SG),|s|=i}M_s(SG)$, where $M_s(SG)=M^{\otimes b([SG: s])}$, it is enough to define an isomorphism $g$ restricted on $M_s(SG)$. Consider the map $g_s$ from $M_s(SG)$ to $M_s(SG')$, which is defined to be
\begin{equation*}
g_s=
\begin{cases}
-id, &\text{if } \{e_k, e_{k+1}\}\subset s\\
id, &{\text{otherwise}}\\
\end{cases}
\end{equation*}
Now we define the map $g: C^i(SG)\to C^i(SG')$ as $g=\bigoplus\limits_{s\subseteq E(SG), |s|=i}g_s$. It is a routine exercise to check that $g$ is a chain map which induces an isomorphism between $H^i(SG)$ and $H^i(SG')$.
\end{proof}

We end this subsection with a concrete example.
\begin{ex}\label{ex4.2}
Let us consider the signed graph $SP_3$, where $E(SP_3)=\set{e_1,e_2,e_3}$, and $\sigma(e_1)=\sigma(e_2)=+1$, $\sigma(e_3)=-1$, as Figure \ref{fig:2} shows.
\begin{figure}[h]
\centering
\begin{tikzpicture}[baseline=0]
\draw[fill] (0,1.5) circle (.07) node[right]{$v_1$};
\draw[fill] (1.5,-0.9) circle (.07) node[right]{$v_2$};
\draw[fill] (-1.5,-0.9) circle (.07) node[left]{$v_3$};
\draw (0,1.5)--node[left]{$e_1,+$}(-1.5,-0.9);
\draw (0,1.5)--node[right]{$e_2,+$}(1.5,-0.9);
\draw (-1.5,-0.9)--node[below]{$e_3,-$}(1.5,-0.9);
\end{tikzpicture}
\caption{A signed graph $SP_3$}
\label{fig:2}
\end{figure}
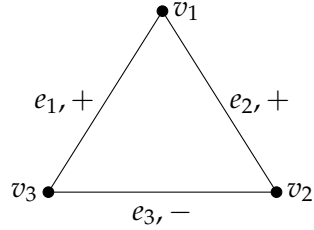

For each enhanced state $S=(s,c)$ of $SP_3$, we denote $s$ and $c$ by elements of $\set{0,1}^3$ and $\set{1,x}^3$, such that the $i$-th position of $s$ is 1 (0) if $e_i\in s$ $(e_i\notin s)$, and all the vertices of the same component have the same color. For example, the enhanced states shown in Figure \ref{fig:3} are $(000, 11x)$, $(100, x1x)$ and $(101, 111)$.
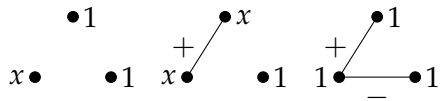
\begin{figure}[htb]
\centering
\begin{tikzpicture}[baseline=0]
\draw[fill] (-2,0.5) circle (.07) node[right]{$1$};
\draw[fill] (-1.5,-0.3) circle (.07) node[right]{$1$};
\draw[fill] (-2.5,-0.3) circle (.07) node[left]{$x$};
\draw (0,0.5)--node[left]{$+$}(-.5,-0.3);
\draw[fill] (0,0.5) circle (.07) node[right]{$x$};
\draw[fill] (0.5,-0.3) circle (.07) node[right]{$1$};
\draw[fill] (-0.5,-0.3) circle (.07) node[left]{$x$};
\draw[fill] (2,0.5) circle (.07) node[right]{$1$};
\draw[fill] (2.5,-0.3) circle (.07) node[right]{$1$};
\draw[fill] (1.5,-0.3) circle (.07) node[left]{$1$};
\draw (2,0.5)--node[left]{$+$}(1.5,-0.3);
\draw (1.5,-0.3)--node[below]{$-$}(2.5,-0.3);
\end{tikzpicture}
\caption{Three enhanced states}
\label{fig:3}
\end{figure}

One computes
\begin{equation*}
\begin{aligned}
        C^0(SP_3)&=&\text{span}\langle&(000,xxx),\\
                &     &                          &(000,xx1),(000,x1x),(000,1xx),\\
                &     &                          &(000,x11),(000,1x1),(000,11x),\\
                &     &                          &(000,111)\rangle;\\
        C^1(SP_3) &=&\text{span}\langle&(001,xxx),(010,xxx),(100,xxx),\\
                &     &                          &(001,x11),(001,1xx),(010,xx1),\\
                &     &                          &(010,11x),(100,x1x),(100,1x1),\\
                &     &                          &(001,111),(010,111),(100,111)\rangle;\\
        C^2(SP_3) &=&\text{span}\langle&(011,xxx),(101,xxx),(110,xxx),\\
                & &                          &(011,111),(101,111),(110,111)\rangle;\\
        C^3(SP_3)&=&\text{span}\langle&(111,111)\rangle.\\
        B^0(SP_3)&=&0;\\
        B^1(SP_3)&=&\text{span}\langle&(001,xxx)+(010,xxx),\\
                &     &                          &(010,xxx)+(100,xxx),\\
                &     &                          &(100,xxx)+(001,xxx),\\
                &     &                          &(100,x1x)+(010,xx1)+(001,x11),\\
                &     &                          &(100,1x1)+(010,xx1)+(001,1xx),\\
                &     &                          &(100,x1x)+(010,11x)+(001,1xx),\\
                &     &                          &(001,111)+(010,111)+(100,111)\rangle;\\
        B^2(SP_3)&=&\text{span}\langle&(110,xxx)+(101,xxx),(101,xxx)+(011,xxx)\\
                &     &                          &(110,111)+(101,111),(101,111)+(011,111)\rangle;\\
        B^3(SP_3)&=&\text{span}\langle&(111,111)\rangle.\\
        Z^0(SP_3)&=&\text{span}\langle&(000,xxx)\rangle;\\
        Z^1(SP_3)&=&\text{span}\langle&(001,xxx),(010,xxx),(100,xxx),\\
                &     &                          &(001,x11)-(001,1xx),\\
                &     &                          &(010,xx1)-(010,11x),\\
                &     &                          &(100,x1x)-(100,1x1),\\
                &     &                          &(001,x11)+(010,11x)+(100,1x1),\\
                &     &                          &(001,111)+(010,111)+(100,111)\rangle;\\
        Z^2(SP_3)&=&\text{span}\langle&(011,xxx),(101,xxx),(110,xxx),\\
                &     &                          &(110,111)+(101,111),(101,111)+(011,111)\rangle;\\
        Z^3(SP_3)&=&\text{span}\langle&(111,111)\rangle.\\
\end{aligned}
\end{equation*}
It follows that
\begin{equation*}
\begin{aligned}
        H^0(SP_3)&=Z^0(SP_3)/B^0(SP_3)\cong\mathbb{Z}\{3\};\\
        H^1(SP_3)&=Z^1(SP_3)/B^1(SP_3)\cong\mathbb{Z}_2\{2\}\mathop\oplus\mathbb{Z}\{1\};\\
        H^2(SP_3)&=Z^2(SP_3)/B^2(SP_3)\cong\mathbb{Z}\{1\};\\
        H^3(SP_3)&=Z^3(SP_3)/B^3(SP_3)\cong0.\\
\end{aligned}
\end{equation*}
By using Proposition~\ref{Theorem4.4}, we can calculate the chromatic polynomial
$$P_{SP_3}(1+q)=\sum_{i\geq0}(-1)^iq\text{dim}C^i(SP_3)=\sum_{i\geq0}(-1)^iq\text{dim}H^i(SP_3)=q^3-q+q=q^3,$$
where $q$ is even. On the other hand, this result can also be obtained by using the deletion-contraction relation on the signed graph, which is shown in Example \ref{ex3.2}.
\end{ex}
  
\subsubsection{A categorification of the deletion-contraction relation}
The aim of this subsection is to introduce a long exact sequence, which recovers the deletion-contraction relation given in Proposition \ref{proposition3.1} if one takes the Euler characteristic of this long exact sequence. So in some sense, the long exact sequence in Corollary \ref{corollary4.10} can be considered as a categorification of the deletion-contraction rule in Proposition \ref{proposition3.1}.

\begin{thm}\label{theorem4.9}
Let $SG$ be a signed graph and $e$ a positive edge, we have the following short exact sequence
\begin{center}
$0\to C^{\bullet-1}(SG/e)\to C^{\bullet}(SG)\to C^{\bullet}(SG-e)\to0$.
\end{center}
\end{thm}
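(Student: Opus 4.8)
The plan is to realize the sequence through the decomposition of $C^\bullet(SG)$ according to whether the edge $e$ is used. By Proposition~\ref{proposition4.8} I may reorder the edges so that $e$ is the last edge of $E(SG)$; this removes all sign ambiguities below. Write $C^\bullet_e(SG)$ for the subspace spanned by those enhanced states $S=(s,c)$ with $e\in s$. Since applying $d_s$ to such a state only adjoins edges $e'\neq e$, the result still contains $e$, so $C^\bullet_e(SG)$ is a subcomplex of $(C^\bullet(SG),d_s)$; its quotient has as a basis exactly the states $(s,c)$ with $e\notin s$. The whole argument then reduces to identifying the subcomplex with $C^{\bullet-1}(SG/e)$ and the quotient with $C^\bullet(SG-e)$, giving the inclusion and projection of the asserted sequence.

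The quotient is the easy half. For $e\notin s$ the signed subgraphs $[SG:s]$ and $[(SG-e):s]$ coincide, so they have identical balanced and unbalanced components and $M_s(SG)=M_s(SG-e)$; hence a state of $SG$ with $e\notin s$ is literally a state of $SG-e$. I will check that the induced map $\beta\colon C^\bullet(SG)\to C^\bullet(SG-e)$ (identity on states with $e\notin s$, zero on states with $e\in s$) is a surjective chain map: applying $d_s$ in $SG$ and discarding the terms that acquire $e$ agrees with applying $d_s$ in $SG-e$, because the merging rule, the multiplication $m$, the projection $f$ (which depends only on which components are unbalanced), and the signs $(-1)^{n(e')}$ all coincide once $e\notin s$. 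Its kernel is precisely $C^\bullet_e(SG)$.

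The heart of the proof is the subcomplex identification $C^\bullet_e(SG)\cong C^{\bullet-1}(SG/e)$, and here is where positivity of $e$ is used. Writing each $s\ni e$ as $s=s'\cup\{e\}$ with $s'\subseteq E(SG/e)$, I claim there is a natural \emph{balance-preserving} bijection between the components of $[SG:s'\cup\{e\}]$ and those of $[SG/e:s']$. On the level of connectivity this is clear, since adjoining $e$ has the same effect as identifying its two endpoints. For balance I invoke $\sigma(e)=+1$: a circuit of $[SG:s'\cup\{e\}]$ traversing $e$ corresponds to a circuit of $[SG/e:s']$ obtained by deleting $e$, and since $\sigma(e)=1$ its sign is unchanged, while circuits avoiding $e$ correspond directly. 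Thus a component is balanced on one side iff the corresponding component is balanced on the other, which simultaneously yields $M_{s'\cup\{e\}}(SG)\cong M_{s'}(SG/e)$ and shows that the constraint ``unbalanced components carry the color $1$'' transports across the bijection. Defining $\alpha\colon C^{\bullet-1}(SG/e)\to C^\bullet(SG)$ by $\alpha(s',c')=(s'\cup\{e\},c)$, with $c$ the transported coloring, gives an injection onto $C^\bullet_e(SG)$.

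It remains to see that $\alpha$ is a chain map. For $e'\neq e$, adding $e'$ on the $SG/e$ side and on the $SG$ side correspond under the bijection: the same components merge (or the same loop forms), $m$ acts on the same tensor factors, and—because balance is preserved even after $e'$ is adjoined—the projection $f$ kills the same terms; the signs match since $e$ being last means $n(e')$ counts the same edges of $s'$ in $SG$ and in $SG/e$. Hence $d_s^{SG}\circ\alpha=\alpha\circ d_s^{SG/e}$, the reindexing $\bullet\mapsto\bullet-1$ accounting for the sign on Euler characteristics that reproduces $P_{SG}=P_{SG-e}-P_{SG/e}$. Combining, $\alpha$ is injective with image $\ker\beta$ and $\beta$ is surjective, which is exactly the claimed short exact sequence; the positive-loop case is recovered by reading $SG/e$ as $SG-e$, where the same bijection holds trivially. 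I expect the genuine obstacle to be the verification that the balance bookkeeping—equivalently the behavior of $f$—is compatible with the component bijection throughout the differential, rather than the routine sign and connectivity checks.
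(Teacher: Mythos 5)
Your proof is correct and follows essentially the same route as the paper: both rest on the splitting $C^i(SG)\cong C^i(SG-e)\oplus C^{i-1}(SG/e)$ together with the key observation that, because $e$ is positive, components of $[SG:s\cup\{e\}]$ and of $[SG/e:s]$ correspond bijectively with balance preserved, so that the differential is triangular with respect to this splitting. The only presentational difference is that the paper places $e$ first in the edge order and packages the statement as the mapping cone of a map $\widetilde{m}\colon C^{\bullet}(SG-e)\to C^{\bullet}(SG/e)$ (thereby also exhibiting the connecting homomorphism of Corollary~\ref{corollary4.10}), whereas you place $e$ last and exhibit the subcomplex and quotient directly, which trades that extra information for a sign-free chain-map check.
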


\begin{proof}
Without loss of generality, let us choose an order of $E(SG)$ such that the positive edge $e$ is the first edge. The order of $E(SG)$ induces the an order for $E(SG/e)$ and $E(SG-e)$.

We first introduce a morphism of cochain complexes, say $\widetilde{m}: C^{\bullet}(SG-e)\to C^{\bullet}(SG/e)$. Recall that $C^i(SG)=\mathop\bigoplus\limits_{s\subseteq E(SG), |s|=i}M_s(SG)$, where $M_s(SG)=M^{\otimes b([SG: s])}$, it suffices to consider a fixed subset $s\subseteq E(SG-e)=E(SG/e)$. If $e$ joins a component of $[SG-e: s]$ to itself, then we define $\widetilde{m}$ to be $f_{SG/e}\circ id\circ f_{SG-e}^{-1}$, where $f_{SG/e}$ and $f_{SG-e}$ are similarly defined as $f$ in Definition \ref{definition4.5}. Otherwise, we define $\widetilde{m}$ to be $f_{SG/e}\circ m\circ f_{SG-e}^{-1}$, where $m$ denotes the multiplication $m: M\otimes M\to M$. One can easily check that $\widetilde{m}$ is well defined.

We claim that the complex $C^{\bullet}(SG)$ is the mapping cone of $\widetilde{m}: C^{\bullet}(SG-e)\to C^{\bullet}(SG/e)$. 

Notice that 
\begin{center}
$C^i(SG)=\bigoplus\limits_{s\subseteq E(SG), |s|=i}M_s(SG)=\bigoplus\limits_{e\notin s\subseteq E(SG), |s|=i}M_s(SG)\bigoplus\bigoplus\limits_{e\in s\subseteq E(SG), |s|=i}M_s(SG)$,
\end{center}
and there is an obvious isomorphism between 
\begin{center}
$C^i(SG-e)=\bigoplus\limits_{s\subseteq E(SG-e), |s|=i}M_s(SG-e)$ and $\bigoplus\limits_{e\notin s\subseteq E(SG), |s|=i}M_s(SG)$. 
\end{center}
For the second summand $\bigoplus\limits_{e\in s\subseteq E(SG), |s|=i}M_s(SG)$, one observes that for any $s\in E(SG/e)$, there is a one-to-one correspondence between the components of $[SG/e : s]$ and the components of $[SG: s\cup\{e\}]$. Since the sign of $e$ is positive, then a component of $[SG/e : s]$ is balanced if and only if the corresponding component in $[SG: s\cup\{e\}]$ is balanced. It follows that 
\begin{center}
$\bigoplus\limits_{e\in s\subseteq E(SG), |s|=i}M_s(SG)\cong \bigoplus\limits_{s\subseteq E(SG/e), |s|=i-1}M_s(SG/e)=C^{i-1}(SG/e)$,
\end{center}
and hence
\begin{center}
$C^i(SG)=C^i(SG-e)\bigoplus C^{i-1}(SG/e)$.
\end{center}

Consider the differential $d_s': C^i(SG)\to C^{i+1}(SG)$ given by
$\left(
\begin{array}{ccc}
d_1 &   0   \\
\widetilde{m} & -d_2   \\
\end{array}
\right)$, where $d_1$ denotes the differential on $C^{\bullet}(SG-e)$ and $d_2$ denotes the differential on $C^{\bullet}(SG/e)$. Recall that $e$ is the first edge, it is not difficult to find that $d_s'$ coincides with the differential $d_s$ defined in Proposition \ref{prop4.10}. This finishes the proof of the claim and the result follows directly.
\end{proof}

\begin{cor}\label{corollary4.10}
Let $SG$ be a signed graph and $e$ a positive edge, we have the following long exact sequence
\begin{center}
$\cdots\to H^{i-1}(SG/e)\to H^i(SG)\to H^i(SG-e)\to H^i(SG/e)\to\cdots$.
\end{center}
\end{cor}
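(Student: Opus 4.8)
The plan is to apply the standard homological-algebra machinery (the zig-zag, or snake, lemma) that converts a short exact sequence of cochain complexes into a long exact sequence in cohomology, taking as input the short exact sequence established in Theorem~\ref{theorem4.9}; once that input is granted, the remaining work is bookkeeping with the degree shift. First I would record the short exact sequence of cochain complexes supplied by that theorem,
$$0\to C^{\bullet-1}(SG/e)\xrightarrow{\ \alpha\ } C^{\bullet}(SG)\xrightarrow{\ \beta\ } C^{\bullet}(SG-e)\to 0,$$
where, using the decomposition $C^i(SG)=C^i(SG-e)\oplus C^{i-1}(SG/e)$ found in that proof, $\alpha$ is the inclusion of the summand $\bigoplus_{e\in s}M_s(SG)\cong C^{\bullet-1}(SG/e)$ and $\beta$ is the projection onto $\bigoplus_{e\notin s}M_s(SG)\cong C^{\bullet}(SG-e)$. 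I would then confirm that $\alpha$ and $\beta$ are genuine chain maps, which is exactly what the block form $\left(\begin{smallmatrix} d_1 & 0 \\ \widetilde m & -d_2\end{smallmatrix}\right)$ of $d_s$ records: the vanishing of the upper-right entry shows that $C^{\bullet-1}(SG/e)$ is a subcomplex (with differential $-d_2$), while the diagonal entry $d_1$ is the differential induced on the quotient $C^{\bullet}(SG-e)$.

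Next I would invoke the zig-zag lemma, which supplies connecting homomorphisms and the exact sequence
$$\cdots\to H^i\bigl(C^{\bullet-1}(SG/e)\bigr)\to H^i(SG)\to H^i(SG-e)\to H^{i+1}\bigl(C^{\bullet-1}(SG/e)\bigr)\to\cdots.$$
The only genuine identification is the degree shift: the $n$-th term of $C^{\bullet-1}(SG/e)$ is $C^{n-1}(SG/e)$, so its $n$-th cohomology is $H^{n-1}(SG/e)$. Substituting $H^i(C^{\bullet-1}(SG/e))=H^{i-1}(SG/e)$ and $H^{i+1}(C^{\bullet-1}(SG/e))=H^{i}(SG/e)$ turns the displayed sequence into exactly the asserted one. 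I would then describe the connecting map $H^i(SG-e)\to H^i(SG/e)$ concretely by the usual diagram chase: lift a cocycle of $C^i(SG-e)$ along $\beta$ to $C^i(SG)$, apply $d_s$, and pull the result back along $\alpha$ into $C^{i}(SG/e)$.

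There is no deep obstacle, since the substantive content lies in Theorem~\ref{theorem4.9}; the single point demanding care is keeping the index shift consistent, so that the connecting homomorphism lands in $H^i(SG/e)$ and not in $H^{i+1}(SG/e)$. I would close by noting that, because every map in the short exact sequence preserves the internal $j$-grading, the long exact sequence in fact refines over each fixed degree $j$, and that taking graded Euler characteristics of it recovers the deletion-contraction relation $P_{SG}(\lambda)=P_{SG-e}(\lambda)-P_{SG/e}(\lambda)$ of Proposition~\ref{proposition3.1}, which is the motivation for the construction.
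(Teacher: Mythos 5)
Your proposal is correct and follows exactly the route the paper intends: Corollary \ref{corollary4.10} is obtained from the short exact sequence of Theorem \ref{theorem4.9} by the standard zig-zag lemma, with the degree shift accounting for the identification $H^i(C^{\bullet-1}(SG/e))\cong H^{i-1}(SG/e)$. Your reading of the block differential (the second summand is a subcomplex because the upper-right entry vanishes, and the quotient carries $d_1$) and your description of the connecting homomorphism match the maps $\alpha^*$, $\beta^*$ described in the paper's subsequent remark.
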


\begin{rem}
It would be helpful to describe the two maps $\alpha^*: H^{i-1}(SG/e)\to H^i(SG)$ and $\beta^*: H^i(SG)\to H^i(SG-e)$ intuitively. In order to understand $\alpha^*$, it suffices to consider the map $\alpha: C^{i-1}(SG/e)\to C^i(SG)$. With a given enhanced state $S=(s, c)$ of $SG/e$, notice that $[SG/e: s]$ and $[SG: s\cup\{e\}]$ not only have the same number of components but also the same number of balanced components, since $e$ is positive. The we can define $\alpha(S)=(s\cup\{e\}, c_e)$, which induces the map $\alpha^*: H^{i-1}(SG/e)\to H^i(SG)$. For $\beta^*$, let us choose an enhanced state $S=(s, c)$ of $SG$. If $e\notin s$, then we define $\beta(S)=S$, which is also an enhanced state of $SG-e$, since removing $e$ from a balanced component yields a balanced component. Otherwise, if $e\in s$ then we set $\beta(S)=0$. The map $\beta^*: H^i(SG)\to H^i(SG-e)$ can be induced from $\beta$.
\end{rem}

\subsection{A categorification for the balanced chromatic polynomial}\label{section5}
In this subsection, we discuss how to categorify the balanced chromatic polynomial, i.e. the signed chromatic polynomial with even $\lambda$. 

\subsubsection{Cochain groups on signed graphs}
Let $SG$ be a signed graph on $G$, for each signed subgraph $[SG: s]$ led by $s\subseteq E(SG)$, we assign a graded $\mathbb{Z}$-module $M^b_s(SG)$ as follows:
\begin{itemize}
\item if $[SG: s]$ is balanced, we assigned a copy of $M$ to each component and then take tensor product, i.e. $M^b_s(SG)=M^{\otimes k(s)}$ if $[SG: s]$ is balanced.
\item if $[SG: s]$ is unbalanced, we assigned $0$ to it.
\end{itemize}
In this case, it is guaranteed that $q\text{dim}M^b_s(SG)=Q_{[SG: s]}(1+q)$ when $q$ is odd. As an analogy of Proposition~\ref{Theorem4.4}, we have the following result.
\begin{prop}\label{proposition4.11}
Let $SG$ be a signed graph, we define $C^i_b(SG)=\mathop\bigoplus\limits_{s\subseteq E(G),|s|=i}M_s^b(SG)$, then
$$P_{SG}(1+q)=\sum\limits_{i\geq 0}(-1)^{i}\sum\limits_{s\subseteq E(G),|s|=i}q\text{dim} M^b_{s}(SG)=\sum\limits_{i\geq 0}(-1)^{i}q\text{dim} C_b^i(SG),$$
if $q$ is an odd integer.
\end{prop}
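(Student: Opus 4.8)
The plan is to reduce the statement to the inclusion--exclusion formula for $P_{SG}(\lambda)$ together with the additivity of the graded dimension under direct sums. Once the parity bookkeeping is set up, the whole argument is a short chain of substitutions, with no cohomological input required: this is an identity at the level of cochain groups and their graded dimensions only, so neither $d_s^2=0$ nor the construction of the differential plays any role here.

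First I would recall the inclusion--exclusion identity
$$P_{SG}(\lambda)=\sum_{i\geq 0}(-1)^i\sum_{s\subseteq E(G),\,|s|=i}Q_{[SG:s]}(\lambda)$$
proved in Section~\ref{section3}, and specialize it to $\lambda=1+q$ with $q$ an odd integer, so that $1+q$ is even. This is the correct regime: the balanced chromatic polynomial is by definition the signed chromatic polynomial evaluated at even $\lambda$, which is precisely what $C^\bullet_b(SG)$ is meant to categorify.

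Next I would substitute the identity $q\text{dim}\,M^b_s(SG)=Q_{[SG:s]}(1+q)$, valid for odd $q$, which was recorded in the paragraph preceding the proposition. The point to stress in the write-up is \emph{why} this identity holds, since it is the heart of the matter. If $[SG:s]$ is balanced then every component is balanced, so $k(s)=b([SG:s])$ and $q\text{dim}\,M^{\otimes k(s)}=(1+q)^{k(s)}=Q_{[SG:s]}(1+q)$. If $[SG:s]$ is unbalanced then $M^b_s(SG)=0$ has graded dimension $0$, which matches $Q_{[SG:s]}(1+q)=0$ for even $\lambda$ by Proposition~\ref{prop3.4}. Plugging this into the inclusion--exclusion sum gives the first claimed equality.

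Finally, applying $q\text{dim}(\mathcal{M}\oplus\mathcal{N})=q\text{dim}(\mathcal{M})+q\text{dim}(\mathcal{N})$ to the inner sum turns $\sum_{|s|=i}q\text{dim}\,M^b_s(SG)$ into $q\text{dim}\bigoplus_{|s|=i}M^b_s(SG)=q\text{dim}\,C^i_b(SG)$, which is the second equality. The only genuine subtlety --- and the step I would treat most carefully --- is the parity matching: one must check that the even-$\lambda$ branch of $Q_{[SG:s]}$, in which unbalanced subgraphs contribute $0$, corresponds exactly to the design choice $M^b_s(SG)=0$ for unbalanced $[SG:s]$. This is deliberately built into the definition of $M^b_s(SG)$, and it is the feature that distinguishes this construction from the one behind Proposition~\ref{Theorem4.4}, where unbalanced components instead receive a copy of $\mathbb{Z}$ and $q$ is taken even.
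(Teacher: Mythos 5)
Your proposal is correct and follows exactly the argument the paper intends (the paper leaves Proposition \ref{proposition4.11} unproved, relying on the remark that $q\text{dim}\,M^b_s(SG)=Q_{[SG:s]}(1+q)$ for odd $q$ together with the inclusion--exclusion formula and additivity of graded dimension). Your write-up simply makes that implicit chain of substitutions explicit, including the correct parity check against Proposition \ref{prop3.4}.
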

 
\subsubsection{The differential}
\begin{defn}
An enhanced state $S=(s, c)$ of $G$ is an \emph{enhanced state} of $SG$ for balanced chromatic polynomial if and only if $[G: s]$ is balanced.
\end{defn}
As before, we set 
$$C^{i, j}_b(SG)=\text{span}\left\langle S|S\text{ is an enhanced state of } SG\text { with } i(S)=i, j(S)=j\right\rangle,$$ 
where $i(S)=|s|$ and $j(S)$ equals the number of components that $x$ is assigned to. It follows immediately that $C^i_b(SG)=\bigoplus\limits_{j\geq0}C^{i, j}_b(SG)$.

Now we introduce a map $f_b: C^{i, j}(G)\to C_b^{i, j}(SG)$ for balanced chromatic polynomial.
\begin{defn}\label{definition4.13}
Let $SG$ be a signed graph on $G$, for each enhanced state $S=(s, c)$ in $C^{i, j}(G)$, we define
\begin{equation*}
f_b(S)=
\begin{cases}
S,&{[SG: s] \text{ is balanced;}}\\
0,&{\text{otherwise.}}\\
\end{cases}
\end{equation*}
\end{defn}
We extend $f_b$ to a linear projection from $C^{i,j}(G)$ to $ C_b^{i,j}(SG)$, or, from $C^{i}(G)$ to $ C_b^{i}(SG)$ if one sums over $j$. Let us still use $f_b$ to denote it.

By using $f_b$, we define the differential $d_b$ as the below, which is similar to the definition of $d_s$.
\begin{equation*}
\xymatrix{
C^i(G) \ar[rr]^{d}\ar[d]_{f_b} &   & C^{i+1}(G) \ar[d]_{f_b}\\
C^i_b(SG)\ar[rr]^{d_b}         &   & C_b^{i+1}(G)}
\end{equation*}
\begin{prop}
The map $d_b: C_b^i(SG)\to C_b^{i+1}(SG)$ defined by $d_b=f_b\circ d\circ f_b^{-1}$ is well defined.
\end{prop}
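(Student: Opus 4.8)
The plan is to mimic the proof of Proposition~\ref{prop4.10}. Since $f_b$ and $d$ are both linear and $f_b$ restricts to the identity on $C_b^i(SG)\subseteq C^i(G)$, any two preimages of an element of $C_b^i(SG)$ under $f_b$ differ by an element of $\ker f_b$; hence the well-definedness of $d_b=f_b\circ d\circ f_b^{-1}$ is equivalent to showing that $f_b\circ d(z)=0$ for every $z\in C^i(G)$ with $f_b(z)=0$.

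First I would describe $\ker f_b$ explicitly. By Definition~\ref{definition4.13}, $f_b(S)=S$ when $[SG:s]$ is balanced and $f_b(S)=0$ otherwise, and this prescription depends only on $s$, not on the coloring $c$. Writing $z=\sum_{i=1}^m a_iS_i$ with $a_i\neq0$ and $S_i=(s_i,c_i)$ distinct enhanced states, the condition $f_b(z)=0$ forces $[SG:s_i]$ to be unbalanced for every $i$, since any enhanced state with balanced underlying subgraph would survive as a nonzero basis element and could not cancel against the others.

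The key step, which I regard as the heart of the argument, is the observation that adding an edge cannot turn an unbalanced signed subgraph into a balanced one: if $[SG:s_i]$ contains a negative circuit, that same circuit persists in $[SG:s_i\cup\{e\}]$ for every $e\in E(G)-s_i$, so $[SG:s_i\cup\{e\}]$ remains unbalanced. Consequently, each term $(S_i)_e=(s_i\cup\{e\},(c_i)_e)$ appearing in $d(S_i)=\sum_{e\notin s_i}(-1)^{n(e)}(S_i)_e$ has an unbalanced underlying subgraph, so $f_b((S_i)_e)=0$. Summing over $e$ and then over $i$ yields $f_b\circ d(z)=0$, as desired.

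Compared with Proposition~\ref{prop4.10}, I expect this argument to be noticeably simpler rather than harder, so there is no real obstacle to flag: there the map $f$ depended on the coloring, and one had to track which component received an $x$ and examine the three ways a new edge could attach, whereas here $f_b$ depends only on the subset $s$. The only thing to verify with any care is the monotonicity of balancedness under edge addition, and this is immediate from the fact that the circuit set of $[SG:s_i]$ is contained in that of $[SG:s_i\cup\{e\}]$; no case analysis on how $e$ meets the existing components is required.
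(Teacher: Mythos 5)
Your proposal is correct and follows essentially the same route as the paper: reduce well-definedness to showing $f_b\circ d$ kills $\ker f_b$, observe that $f_b(z)=0$ forces each $[SG:s_i]$ to be unbalanced, and conclude that every term of $d(S_i)$ still has unbalanced underlying subgraph. Your observation that a negative circuit simply persists under edge addition is in fact a slightly cleaner way to close the argument than the paper's case analysis on how the new edge meets the unbalanced component, but the substance is identical.
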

\begin{proof}
Similar to the proof of Proposition \ref{prop4.10}, it suffices to show that for any $z=\sum\limits_{i=1}^ma_iS_i$ $(a_i\neq 0)$, if $f_b(z)=0$ then $f_b\circ d(z)=0$. Here $S_i=(s_i, c_i)$ for some $s_i\subset E(SG)$. According to the definition of $f_b$, $[SG: s_i]$ must be unbalanced. Notice that the differential $d$ corresponds to the following two cases
\begin{enumerate}
\item adding an edge connecting an unbalanced component to itself,
\item adding an edge connecting an unbalanced component with another component.
\end{enumerate}
Both of them give rise to a new unbalanced component. It follows that $f_b\circ d(S_i)=0$ and hence $f_b\circ d(z)=0$.
\end{proof}

Since $d^2=0$ and $d_b=f_b\circ d\circ f_b^{-1}$, we obtain $d_b^2=f_b\circ d^2\circ f_b^{-1}=0$ and hence we have the following cochain complex
\begin{center}
$C^0_b(SG)\stackrel{d_b}\To C^1_b(SG)\stackrel{d_b}\To C^2_b(SG)\stackrel{d_b}\To\cdots\stackrel{d_b}\To C^i_b(SG)\stackrel{d_b}\To\cdots$.
\end{center}

\begin{defn}
We call the cohomology groups of the cochain complex above the \emph{balanced chromatic cohomology groups} of $SG$ and denote them by $H^{\bullet}_b(SG)$.
\end{defn}

Similar to Proposition \ref{proposition4.8}, the balanced chromatic cohomology groups $H^{\bullet}_b(SG)$ are independent of the choice of the order of edges. Now we give two examples. The first one, as a supplement of Example \ref{ex4.2}, calculates the balanced chromatic cohomology groups of the signed graph $SP_3$. The second one is devoted to calculate the balanced chromatic cohomology groups of $SP_2$, see Figure \ref{fig:5}.

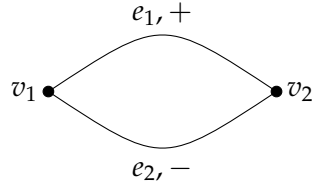
\begin{figure}[h]
\centering
\begin{tikzpicture}
\draw[fill] (-1.5,0) circle (.07) node [left] {$v_1$};
\draw[fill] (1.5,0) circle (.07) node [right] {$v_2$};
\draw (-1.5,0) ..controls (0,1)..node[above]{$e_1,+$} (1.5,0);
\draw (-1.5,0) ..controls (0,-1)..node[below]{$e_2,-$} (1.5,0);
\end{tikzpicture}
\caption{An unbalanced signed graph $SP_2$}
\label{fig:5}
\end{figure}

\begin{ex}
We use the same notions as that in Example \ref{ex4.2}. It's easy to find that for each $i\in\set{0,1,2}$, we have $C^i_b(SP_3)=C^i(SP_3)$, $B^i_b(SP_3)=B^i(SP_3)$ and $C^3_b(SP_3)=0$, $B^3_b(SP_3)=0$. On the other hand, $Z^0_b(SP_3)=Z^0(SP_3)$, $Z^1_b(SP_3)=Z^1(SP_3)$, $Z^2_b(SP_3)=C^2(SP_3)$, $Z^3_b(SP_3)=0$. Then we obtain the balanced chromatic cohomology groups as below
\begin{equation*}
\begin{aligned}
        H^0_b(SP_3)&=Z^0(SP_3)/B^0(SP_3)=H^0(SP_3)\cong\mathbb{Z}\{3\};\\
        H^1_b(SP_3)&=Z^1(SP_3)/B^1(SP_3)=H^1(SP_3)\cong\mathbb{Z}_2\{2\}\mathop\oplus\mathbb{Z}\{1\};\\
        H^2_b(SP_3)&=Z^2(SP_3)/B^2(SP_3)\cong\mathbb{Z}\{1\}\oplus\mathbb{Z};\\
        H^3_b(SP_3)&=0.
\end{aligned}
\end{equation*}
We can calculate the balanced chromatic polynomial of $SG$
\begin{center}
$P_{SP_3}(1+q)=\sum\limits_{i\geq0}(-1)^iq\text{dim}C^i_b(SP_3)=\sum\limits_{i\geq0}(-1)^iq\text{dim}H^i_b(SP_3)=q^3-q+q+1=q^3+1,$
\end{center}
where $q$ is odd. This result coincides with the result obtained in Example \ref{ex3.2}.
\end{ex}

\begin{ex}\label{ex:4.4}
Let $SP_2$ be an unbalanced signed graph on $P_2$, where $E(SP_2)=\set{e_1,e_2}$ and $\sigma(e_1)=+1$, $\sigma(e_2)=-1$, as Figure \ref{fig:5} shows. For each enhanced state $S=(s, c)$ of $SP_2$, we denote $s$ and $c$ similarly to Example \ref{ex4.2}, then we can calculate the cochain groups and the cohomology groups as follows.
\begin{equation*}
\begin{aligned}
    		C^0_b(SP_2)&=\text{span}\langle(00,xx),(00,1x),(00,x1),(00,11)\rangle;\\
    		C^1_b(SP_2)&=\text{span}\langle(01,xx),(10,xx),(01,11),(10,11)\rangle;\\
    		C^2_b(SP_2)&=0;\\
    		B^0_b(SP_2)&=0;\\
    		B^1_b(SP_2)&=\text{span}\langle(10,xx)+(01,xx),(10,11)+(01,11)\rangle;\\
    		B^2_b(SP_2)&=0;\\
    		Z^0_b(SP_2)&=\text{span}\langle(00,xx),(00,1x)-(00,x1)\rangle;\\
    		Z^1_b(SP_2)&=\text{span}\langle(01,xx),(10,xx),(01,11),(10,11)\rangle;\\
    		Z^2_b(SP_2)&=0;\\
    		H^0_b(SP_2)&=Z^0_b(SP_2)/B^0_b(SP_2)\cong\mathbb{Z}\set{2}\oplus\mathbb{Z}\set{1};\\
    		H^1_b(SP_2)&=Z^1_b(SP_2)/B^1_b(SP_2)\cong\mathbb{Z}\set{1}\oplus\mathbb{Z};\\
    		H^2_b(SP_2)&=Z^2_b(SP_2)/B^2_b(SP_2)\cong0.\\
\end{aligned}
\end{equation*}

According to Proposition \ref{proposition4.11}, the balanced chromatic polynomial can be calculated as
\begin{center}
$P_{SG}(1+q)=\sum\limits_{i\geq0}(-1)^iq\text{dim}C^i_b(SG)=\sum\limits_{i\geq0}(-1)^iq\text{dim}H^i_b(SG)=q^2+q-q-1=q^2-1$,
\end{center}
where $q$ is an odd integer. And we can check this result intuitively: suppose there are $1+q$ colors, if we assign one color to $v_1$, to which there are $1+q$ approaches. There are $q-1$ colors left to be chosen to color $v_2$. According to the multiplication principle, $P_{SP_2}(1+q)=(q+1)(q-1)=q^2-1$, if $q$ is odd. 
\end{ex}

We remark that it was proved that for unsigned graphs, the knight move conjecture holds for the chromatic cohomology with rational coefficients \cite{CCR}. More precisely, they proved that for a unsigned graph $G$, the nontrivial cohomology groups come in isomorphic pairs: $H^{i, |V(G)|-i}(G;\mathbb{Q})\cong H^{i+1, |V(G)|-i-2}(G;\mathbb{Q})$. According to the examples above, we find that this is not true for signed chromatic cohomology.

\subsubsection{Another long exact sequence}
As an analogy to Corollary \ref{corollary4.10}, the deletion-contraction relation for balanced chromatic polynomial also corresponds to a long exact sequence. This can be derived from the following theorem.

\begin{thm}\label{theorem4.16}
Let $SG$ be a signed graph and $e$ a positive edge, we have the following short exact sequence
\begin{center}
$0\to C_b^{\bullet-1}(SG/e)\to C_b^{\bullet}(SG)\to C_b^{\bullet}(SG-e)\to0$.
\end{center}
\end{thm}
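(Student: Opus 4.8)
The plan is to mirror, step for step, the mapping-cone argument used in the proof of Theorem~\ref{theorem4.9}, replacing the chromatic data by its balanced counterparts: the cochain groups $C^{\bullet}$ by $C_b^{\bullet}$, the modules $M_s$ by $M_s^b$, and the projection $f$ by the map $f_b$ of Definition~\ref{definition4.13}. First I would fix an ordering of $E(SG)$ in which the positive edge $e$ is the first edge; this induces compatible orderings on $E(SG-e)=E(SG)\setminus\{e\}$ and on $E(SG/e)$. I would then split the index set of $C_b^i(SG)=\bigoplus_{s\subseteq E(SG),|s|=i}M_s^b(SG)$ according to whether $e\notin s$ or $e\in s$, and argue that $C_b^{\bullet}(SG)$ is the mapping cone of a morphism $\widetilde{m}_b\colon C_b^{\bullet}(SG-e)\to C_b^{\bullet}(SG/e)$.

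For the summand with $e\notin s$, the signed subgraph $[SG:s]$ literally coincides with $[SG-e:s]$, so $M_s^b(SG)=M_s^b(SG-e)$, including the convention that both equal $0$ exactly when the subgraph is unbalanced; this gives $\bigoplus_{e\notin s}M_s^b(SG)\cong C_b^i(SG-e)$. For the summand with $e\in s$, I would write $s=s'\cup\{e\}$ with $s'\subseteq E(SG/e)$ and $|s'|=i-1$, and invoke the one-to-one correspondence between the components of $[SG/e:s']$ and those of $[SG:s'\cup\{e\}]$. The crucial point, exactly as in Theorem~\ref{theorem4.9}, is that since $e$ is positive, contraction preserves balance: $[SG:s'\cup\{e\}]$ is balanced if and only if $[SG/e:s']$ is balanced. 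Hence $M_{s'\cup\{e\}}^b(SG)\cong M_{s'}^b(SG/e)$ in both the balanced case (matching tensor powers of $M$) and the unbalanced case (both equal to $0$), yielding $\bigoplus_{e\in s}M_s^b(SG)\cong C_b^{i-1}(SG/e)$. Together these produce the decomposition $C_b^i(SG)\cong C_b^i(SG-e)\oplus C_b^{i-1}(SG/e)$.

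Next I would define $\widetilde{m}_b$ on each fixed $s$ by $f_{b,SG/e}\circ\mathrm{id}\circ f_{b,SG-e}^{-1}$ when $e$ joins a component of $[SG-e:s]$ to itself and by $f_{b,SG/e}\circ m\circ f_{b,SG-e}^{-1}$ otherwise, where $f_{b,SG/e}$ and $f_{b,SG-e}$ are the analogues of $f_b$ and $m\colon M\otimes M\to M$ is the multiplication; the trailing projection $f_{b,SG/e}$ automatically kills any contribution in which adjoining $e$ inside one component creates an unbalanced circuit, so $\widetilde{m}_b$ is well defined. Because $e$ is the first edge, the term of $d$ that adjoins $e$ carries sign $(-1)^{0}$ and lands in the $e\in s$ summand, realizing $\widetilde{m}_b$, while the terms adjoining other edges preserve membership of $e$ and reproduce the differential $d_1$ on $C_b^{\bullet}(SG-e)$ and $-d_2$ on $C_b^{\bullet}(SG/e)$, the extra sign arising because $e$ always precedes the edge being added. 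Thus $d_b$ takes the block form $\left(\begin{smallmatrix} d_1 & 0\\ \widetilde{m}_b & -d_2\end{smallmatrix}\right)$, which is precisely the mapping-cone differential; the asserted short exact sequence is then the canonical one attached to a cone.

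The step I expect to demand the most care is the matching of the zero summands: unlike the chromatic construction, where an unbalanced component still contributes a copy of $\mathbb{Z}$, the balanced construction assigns $0$ to every unbalanced subgraph, so one must confirm that the direct-sum decomposition still splits cleanly. This is what the balance-preservation under positive contraction secures, and it is the only genuinely new feature relative to Theorem~\ref{theorem4.9}; once it is verified, the identification of $d_b$ with the cone differential is routine, being identical in form to the corresponding verification there.
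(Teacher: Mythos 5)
Your proposal is correct and follows essentially the same route as the paper: the paper likewise proves Theorem~\ref{theorem4.16} by mimicking the mapping-cone argument of Theorem~\ref{theorem4.9}, using the decomposition $C_b^i(SG)=C_b^i(SG-e)\oplus C_b^{i-1}(SG/e)$ (with $e$ first in the edge ordering), the morphism $\widetilde{m_b}$ defined case-by-case via $f_b\circ \mathrm{id}\circ f_b^{-1}$ or $f_b\circ m\circ f_b^{-1}$, and the block differential $\left(\begin{smallmatrix} d_b^1 & 0\\ \widetilde{m_b} & -d_b^2\end{smallmatrix}\right)$. Your extra attention to the matching of the zero summands for unbalanced subgraphs, secured by balance-preservation under contraction of a positive edge, is exactly the point the paper relies on implicitly.
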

\begin{proof}
The proof is a mimic of that of Theorem \ref{theorem4.9}. For cochain groups, we still have 
\begin{center}
$C_b^i(SG)=\bigoplus\limits_{e\notin s\subseteq E(SG), |s|=i}M_s^b(SG)\oplus\bigoplus\limits_{e\in s\subseteq E(SG), |s|=i}M_s^b(SG)=C_b^i(SG-e)\oplus C_b^{i-1}(SG/e),$
\end{center}
where $e$ is set as the first edge. The morphism $\widetilde{m_b}: C^{\bullet}_b(SG-e)\to C^{\bullet}_b(SG/e)$ can be similarly defined as
\begin{equation*}
\widetilde{m_b}=
\begin{cases}
f_b\circ id\circ f_b^{-1}, &\text{if \emph{e} connects one component to itself} \\
f_b\circ m\circ f_b^{-1}, &\text{if \emph{e} connects two different components}\\
\end{cases}
\end{equation*}
where $m$ denotes the multiplication on $M$ and $f_b$ is the map introduced in Definition \ref{definition4.13}. Let us use $d_b^1$ and $d_b^2$ to denote the differential maps on $C^{\bullet}(SG-e)$ and $C^{\bullet}(SG/e)$ respectively. It is not difficult to check that the differential 
$d_b'=\left(
\begin{array}{ccc}
d_b^1 &   0   \\
\widetilde{m_b} & -d_b^2   \\
\end{array}
\right)$
defined on $C_b^{\bullet}(SG-e)\bigoplus C_b^{\bullet-1}(SG/e)$ coincides with the differential $d_b$ defined on $C_b^{\bullet}(SG)$. In other words, the complex $C_b^{\bullet}(SG)$ is the mapping cone of $\widetilde{m_b}: C_b^{\bullet}(SG-e)\to C_b^{\bullet}(SG/e)$. The result follows.
\end{proof}

\begin{cor}\label{corollary4.17}
Let $SG$ be a signed graph and $e$ a positive edge, we have the following long exact sequence
\begin{center}
$\cdots\to H_b^{i-1}(SG/e)\to H_b^i(SG)\to H_b^i(SG-e)\to H_b^i(SG/e)\to\cdots$.
\end{center}
\end{cor}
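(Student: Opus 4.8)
The plan is to obtain the long exact sequence as a purely formal consequence of the short exact sequence of cochain complexes furnished by Theorem~\ref{theorem4.16}, via the standard zig-zag (snake) lemma of homological algebra. Write
$$0\to C_b^{\bullet-1}(SG/e)\stackrel{\alpha}{\to} C_b^{\bullet}(SG)\stackrel{\beta}{\to} C_b^{\bullet}(SG-e)\to 0$$
for that sequence, and let $D^{\bullet}$ denote the single cochain complex with $D^i:=C_b^{i-1}(SG/e)$, whose differential is the one inherited from the mapping-cone description in the proof of Theorem~\ref{theorem4.16} (namely $-d_b^2$, the sign being immaterial for the cohomology). Under the direct-sum splitting $C_b^i(SG)=C_b^i(SG-e)\oplus C_b^{i-1}(SG/e)$, the map $\alpha$ is the inclusion into the second summand and $\beta$ is the projection onto the first; both are morphisms of cochain complexes, which is precisely what the mapping-cone differential guarantees.

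First I would apply the long exact cohomology sequence attached to a short exact sequence of cochain complexes, obtaining
$$\cdots\to H^i(D^{\bullet})\stackrel{\alpha^*}{\to} H^i(C_b^{\bullet}(SG))\stackrel{\beta^*}{\to} H^i(C_b^{\bullet}(SG-e))\stackrel{\delta}{\to} H^{i+1}(D^{\bullet})\to\cdots.$$
Next I would unwind the degree shift: by definition $H^i(D^{\bullet})=H^{i-1}(C_b^{\bullet}(SG/e))=H_b^{i-1}(SG/e)$ and $H^{i+1}(D^{\bullet})=H_b^i(SG/e)$. Substituting these identifications transforms the sequence into exactly
$$\cdots\to H_b^{i-1}(SG/e)\to H_b^i(SG)\to H_b^i(SG-e)\to H_b^i(SG/e)\to\cdots,$$
which is the claim. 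It is worth recording that the connecting map $\delta$ is concrete: a diagram chase (lift a cocycle $a\in C_b^i(SG-e)$ to $(a,0)$, apply the differential, and pull the result back along $\alpha$) shows that $\delta[a]=[\widetilde{m_b}(a)]$, so $\delta$ is induced by the morphism $\widetilde{m_b}$ of Theorem~\ref{theorem4.16}.

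This argument presents no genuine obstacle, being the formal machinery associated to a short exact sequence; the only points meriting a routine check are the reindexing of the shifted complex $D^{\bullet}$ above and the compatibility with the internal grading. For the latter, since $d_b$ preserves the grading by $j$ (the number of components carrying an $x$) and $\alpha,\beta,\widetilde{m_b}$ are all built from degree-preserving maps, every arrow in the long exact sequence respects the $j$-grading. Hence the sequence refines to a long exact sequence of bigraded groups $H_b^{i,j}$, and taking the graded Euler characteristic recovers the deletion-contraction relation of Proposition~\ref{proposition3.1} for even $\lambda$, justifying the interpretation of Corollary~\ref{corollary4.17} as its categorification.
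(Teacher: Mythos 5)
Your proposal is correct and follows exactly the route the paper intends: Corollary~\ref{corollary4.17} is obtained as the standard long exact cohomology sequence attached to the short exact sequence of cochain complexes in Theorem~\ref{theorem4.16}, with the degree shift accounting for the $\bullet-1$ in the first term. Your additional identification of the connecting homomorphism with the map induced by $\widetilde{m_b}$ is the standard fact for mapping cones and is consistent with the paper's construction.
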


\section{Some properties}\label{section5}
\subsection{Relation to the chromatic cohomology groups of unsigned graphs}
In Section \ref{section4}, we introduced two cochain complexes for signed graphs based on the categorification of the chromatic polynomial of unsigned graphs, which was proposed by Laure Helme-Guizon and Yongwu Rong in \cite{LY}. So it's natural to consider the relation among the cohomology groups of these three cochain complexes. Obviously, if all the edges of a signed graph are positive, then all these cohomology groups coincide. This conclusion can be enhanced a little bit as follows.

For a signed graph $SG$, let us use $n_b(SG)$ to denote the length of the shortest unbalanced circuits. In other words, any $[SG: s]$ is balanced provided that $|s|\leq n_b(SG)-1$. 

\begin{prop}\label{proposition5.1}
For any $i\leq n_b-2$, we have $H^i(SG)=H^i(G)=H^i_b(SG)$.
\end{prop}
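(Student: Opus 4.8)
The plan is to show that for small homological degree $i$, all three cochain complexes agree exactly, not just up to isomorphism, so that their cohomologies automatically coincide. The key observation is the definition of $n_b(SG)$: every $[SG:s]$ with $|s| \leq n_b - 1$ is balanced. I would first analyze the cochain groups degree by degree. Recall $C^i(SG) = \bigoplus_{|s|=i} M_s(SG)$ where each balanced component contributes a copy of $M$ and each unbalanced component a copy of $\mathbb{Z}$; similarly $C^i_b(SG) = \bigoplus_{|s|=i} M^b_s(SG)$, where $M^b_s(SG) = M^{\otimes k(s)}$ if $[SG:s]$ is balanced and $0$ otherwise; and $C^i(G) = \bigoplus_{|s|=i} M^{\otimes k(s)}$ for the unsigned graph. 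For any $s$ with $|s| \leq n_b - 1$, the subgraph $[SG:s]$ is balanced, so every component is balanced, whence $M_s(SG) = M^{\otimes k(s)} = M^b_s(SG)$, all three being the unsigned module $M_s(G)$. Therefore $C^i(SG) = C^i(G) = C^i_b(SG)$ for every $i \leq n_b - 1$.

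Next I would check that the differentials agree in the relevant range. Since $d_s = f \circ d \circ f^{-1}$ and $d_b = f_b \circ d \circ f_b^{-1}$, where $f$ and $f_b$ are the projections of Definitions \ref{definition4.5} and \ref{definition4.13}, I would argue that both $f$ and $f_b$ act as the identity on the subspaces indexed by $s$ with $|s| \leq n_b - 1$: an enhanced state $(s,c)$ with $[SG:s]$ balanced has no unbalanced component, so $f$ fixes it, and $[SG:s]$ is balanced, so $f_b$ fixes it too. The only subtlety is the target of the differential, which raises $|s|$ by one. For $|s| \leq n_b - 2$, the image $(s \cup \{e\}, c_e)$ has $|s \cup \{e\}| \leq n_b - 1$, so $[SG: s\cup\{e\}]$ is still balanced; hence the projections $f, f_b$ act as the identity on the image as well, and $d_s = d = d_b$ restricted to $C^i = C^{i+1}$ for $i \leq n_b - 2$.

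Finally I would assemble the conclusion. Because the cochain groups coincide through degree $n_b - 1$ and the differentials coincide through degree $n_b - 2$, the truncated complexes $0 \to C^0 \to \cdots \to C^{n_b-1}$ are literally identical for $SG$, $G$, and the balanced version. The cohomology $H^i$ for $i \leq n_b - 2$ is computed entirely from the maps $C^{i-1} \to C^i \to C^{i+1}$, all of which lie in the common range, so $H^i(SG) = H^i(G) = H^i_b(SG)$ for $i \leq n_b - 2$. I would present this as a clean degree-by-degree matching argument rather than invoking the long exact sequences of Corollaries \ref{corollary4.10} and \ref{corollary4.17}, since the direct comparison of complexes is both shorter and yields the equalities on the nose.

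\emph{Main obstacle.} The only delicate point is bookkeeping at the boundary degree: I must be careful that the equality of differentials requires $i \leq n_b - 2$ (so the target stays balanced), even though the equality of groups holds up to $i = n_b - 1$. Getting this index shift right is what makes the stated bound $i \leq n_b - 2$ sharp, and it is the one place where a careless argument could mistakenly claim the result for $i = n_b - 1$.
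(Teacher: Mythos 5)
Your proposal is correct and follows essentially the same route as the paper: both arguments rest on the observation that, since every $[SG:s]$ with $|s|\leq n_b-1$ is balanced, the three cochain complexes literally coincide in low degrees (the paper phrases this by truncating all three complexes at degree $n_b$ and noting the truncations are identical). Your version is somewhat more explicit about the index bookkeeping --- groups agreeing up to degree $n_b-1$ but differentials only up to degree $n_b-2$ --- which is exactly the point the paper leaves implicit.
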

\begin{proof}
For any $i\geq n_b$, by replacing all the cochain groups $C^{i}(G), C^{i}(SG)$ and $C^{i}_b(SG)$ with 0, one obtains three new cochain complexes. Since there is no negative circuit now, these three cochain complexes are exactly the same, hence the cohomology groups are isomorphic mutually.
\end{proof}

\subsection{Disjoint union of two signed graphs}
Let $SG_1$ and $SG_2$ be two signed graphs, we denote their disjoint union by $SG_1\sqcup SG_2$. On the chain complex level, we have $C^{\bullet}(SG_1\sqcup SG_2)=C^{\bullet}(SG_1)\otimes C^{\bullet}(SG_2)$ and $C^{\bullet}_b(SG_1\sqcup SG_2)=C^{\bullet}_b(SG_1)\otimes C^{\bullet}_b(SG_2)$. As a corollary of the K\"{u}nneth theorem, the cohomology groups of $SG_1$, $SG_2$ and their disjoint union satisfy the following relations.
\begin{prop}\label{proposition5.2}
For each $i\in\mathbb{N}$, we have
\begin{center} 
$H^i(SG_1\sqcup SG_2)\cong[\mathop\oplus\limits_{p+q=i}H^p(SG_1)\otimes H^q(SG_2)]\oplus[\mathop{\oplus}\limits_{p+q=i+1}H^p(SG_1)\ast H^q(SG_2)],$
$H^i_b(SG_1\sqcup SG_2)\cong[\mathop\oplus\limits_{p+q=i}H^p_b(SG_1)\otimes H^q_b(SG_2)]\oplus[\mathop{\oplus}\limits_{p+q=i+1}H^p_b(SG_1)\ast H^q_b(SG_2)],$
\end{center}
where $\ast$ is the torsion product of abelian groups.
\end{prop}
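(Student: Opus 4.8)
The plan is to reduce the statement to the algebraic Künneth theorem for cochain complexes of free $\mathbb{Z}$-modules, after verifying the tensor-product identification of cochain complexes asserted just above on the nose, differentials included.

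First I would make the chain-level identification precise. Fix an ordering of $E(SG_1\sqcup SG_2)$ in which every edge of $SG_1$ precedes every edge of $SG_2$ and which restricts to the given orderings on the two factors. For $s\subseteq E(SG_1\sqcup SG_2)$ write $s=s_1\sqcup s_2$ with $s_i\subseteq E(SG_i)$; then the components of $[SG_1\sqcup SG_2: s]$ are exactly the disjoint union of those of $[SG_1: s_1]$ and $[SG_2: s_2]$, and each is balanced if and only if it is balanced in its factor. Hence the enhanced-state conditions factor through the two pieces, giving $M_s(SG_1\sqcup SG_2)=M_{s_1}(SG_1)\otimes M_{s_2}(SG_2)$ (and likewise $M^b_s=M^b_{s_1}\otimes M^b_{s_2}$, both sides being $0$ as soon as one factor is unbalanced). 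Summing over $|s|=i$ yields the identification of graded groups $C^i(SG_1\sqcup SG_2)=\bigoplus_{p+q=i}C^p(SG_1)\otimes C^q(SG_2)$.

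The key point is then to check that the differential $d_s$ agrees with the tensor-product (Koszul) differential $d_1\otimes\mathrm{id}+(-1)^p\,\mathrm{id}\otimes d_2$, where $d_1,d_2$ denote the signed differentials of the two factors. Here the chosen ordering does the work: when a summand of $d_s$ adds an edge $e\in E(SG_1)$, every edge of $s$ lying before $e$ belongs to $s_1$, so the sign $(-1)^{n(e)}$ is computed entirely within the first factor and this part of $d_s$ equals $d_1\otimes\mathrm{id}$; when $e\in E(SG_2)$, the edges of $s$ before $e$ consist of all of $s_1$ together with the edges of $s_2$ before $e$, producing the sign $(-1)^{|s_1|}(-1)^{n_2(e)}=(-1)^p(-1)^{n_2(e)}$, which is precisely the $(-1)^p\,\mathrm{id}\otimes d_2$ contribution. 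Since the projections $f$ also factor as $f_{SG_1}\otimes f_{SG_2}$, this gives an isomorphism of cochain complexes $C^{\bullet}(SG_1\sqcup SG_2)\cong C^{\bullet}(SG_1)\otimes C^{\bullet}(SG_2)$; the balanced case is identical after replacing $M_s, f$ by $M^b_s, f_b$.

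Finally I would invoke the algebraic Künneth theorem. Every cochain group here is a \emph{free} $\mathbb{Z}$-module, being spanned by enhanced states, so the hypotheses over the PID $\mathbb{Z}$ are satisfied and for each $i$ one obtains a short exact sequence
\[
0\to\bigoplus_{p+q=i}H^p(SG_1)\otimes H^q(SG_2)\to H^i(SG_1\sqcup SG_2)\to\bigoplus_{p+q=i+1}H^p(SG_1)\ast H^q(SG_2)\to0,
\]
which splits (non-canonically) because the complexes are degreewise free. This gives the stated isomorphism, and the balanced version follows verbatim. The only genuine obstacle is the sign bookkeeping in the differential step; once the ``$SG_1$-edges first'' ordering is fixed it is a routine Koszul-sign computation, and the remainder is the standard Künneth machinery.
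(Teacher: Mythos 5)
Your proposal is correct and follows the same route as the paper: identify $C^{\bullet}(SG_1\sqcup SG_2)$ with the tensor product of the two cochain complexes and apply the K\"unneth theorem for degreewise free complexes over $\mathbb{Z}$. The paper simply asserts the chain-level identification without detail, so your verification of the Koszul sign under the ``$SG_1$-edges first'' ordering is a welcome elaboration rather than a deviation.
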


In particular, when $SG_2$ is a trivial graph, i.e. the graph with exactly one vertex, it is easy to find that $H^i(SG_2)=H^i_b(SG_2)=\mathbb{Z}\oplus\mathbb{Z}x$. It follows that
\begin{center}
$H^i(SG_1\sqcup SG_2)\cong H^i(SG_1)\otimes (\mathbb{Z}\oplus\mathbb{Z}x),$
$H^i_b(SG_1\sqcup SG_2)\cong H^i_b(SG_1)\otimes (\mathbb{Z}\oplus\mathbb{Z}x).$
\end{center}

\subsection{Vertex switching operation}
Recall that two signed graphs are equivalent if they are related by several vertex switchings. As we mentioned before, equivalent signed graphs have the same signed chromatic polynomial. The following result tells us that vertex switching not only preserves the signed chromatic polynomial, but also the signed chromatic cohomology groups.
\begin{prop}\label{proposition5.3}
If two signed graphs $SG_1, SG_2$ are equivalent, then $H^i(SG_1)\cong H^i(SG_2)$ and $H^i_b(SG_1)\cong H^i_b(SG_2)$.
\end{prop}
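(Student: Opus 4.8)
The plan is to reduce the statement to the case of a single vertex switching and then show that such a switching does not alter the cochain complexes at all. Since equivalence is by definition generated by finitely many vertex switchings, by induction it suffices to prove $H^i(SG)\cong H^i(v(SG))$ and $H^i_b(SG)\cong H^i_b(v(SG))$ for a single vertex switching at a vertex $v$, where $v(SG)$ denotes the resulting signed graph. Throughout I would fix one common ordering of $E(G)$ and use it for both $SG$ and $v(SG)$, which is legitimate since switching leaves the edge set untouched (and by Proposition \ref{proposition4.8} the answer is order-independent anyway).

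The crucial observation, already implicit in Section \ref{section3}, is that a vertex switching leaves the underlying graph $G$ and its edge set unchanged and preserves the sign of every circuit. Indeed, the switching replaces $\sigma(e)$ by $u(v_1)\sigma(e)u(v_2)$ for every edge $e=v_1v_2$, so in the product of signs along any circuit each vertex is an endpoint of exactly two edges and hence contributes the factor $u$ to an even power, leaving the product unchanged. Applying this not just to $SG$ but to every spanning subgraph, I would conclude that for each $s\subseteq E(G)$ the graphs $[SG: s]$ and $[v(SG): s]$ have the same components, and a component is balanced in $[SG: s]$ if and only if it is balanced in $[v(SG): s]$. In particular $b([SG: s])=b([v(SG): s])$ for all $s$.

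I would then read off the conclusion directly on the chain level. Because the components of each $[G: s]$ and their balance coincide, an enhanced state $(s,c)$ assigns $1$ to all unbalanced components of $[SG: s]$ exactly when it does so for $[v(SG): s]$; hence the two families of enhanced states are identical, so $C^{i,j}(SG)=C^{i,j}(v(SG))$ as subgroups of $C^{i,j}(G)$. The differential $d_s=f\circ d\circ f^{-1}$ depends only on $G$, the chosen edge order, and the projection $f$ of Definition \ref{definition4.5}, and $f$ is determined solely by which components are unbalanced; since all of these data agree, $d_s$ is literally the same map for both graphs, so the complexes coincide and $H^i(SG)=H^i(v(SG))$. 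Replacing $M_s$, $f$, $d_s$ by their balanced analogues $M^b_s$, $f_b$, $d_b$ and using that the condition ``$[G: s]$ is balanced'' is itself switching-invariant, the identical argument yields $H^i_b(SG)=H^i_b(v(SG))$, which closes the induction.

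Since the two complexes turn out to be equal rather than merely isomorphic, there is no deep obstacle here; the only points demanding care are the circuit-sign computation guaranteeing that balance is preserved component-by-component on \emph{every} $[SG: s]$ (not just on $SG$ itself), and the verification that the two projections $f$ (respectively $f_b$) are the same map, so that the induced differentials match exactly rather than up to an isomorphism.
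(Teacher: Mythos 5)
Your proposal is correct and follows essentially the same route as the paper: reduce to a single vertex switching, observe that switching preserves the sign of every circuit and hence the balance of every component of every spanning subgraph $[SG\colon s]$, and conclude that the cochain complexes (and thus the cohomology groups) agree. Your version is slightly more explicit than the paper's in noting that the complexes are literally equal rather than merely related by a cochain map, which is a welcome sharpening but not a different argument.
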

\begin{proof}
It suffices to consider the case that $SG_2$ is obtained from $SG_1$ by applying vertex switching on a vertex $v\in V(SG_1)$. Notice that for any $s\subseteq E(SG_1)$, a component in $[SG_1: s]$ is balanced if and only if the corresponding component in $[SG_2: s]$ is also balanced. This induces a cochain map from $C^{\bullet}(SG_1)$ to $C^{\bullet}(SG_2)$ and another cochain map from $C^{\bullet}_b(SG_1)$ to $C^{\bullet}_b(SG_2)$, both of which induce isomorphisms between the cohomology groups.
\end{proof}

\subsection{Contracting a pendant edge}
\begin{defn}
Let $SG$ be a signed graph, suppose $v\in V(SG)$ is a vertex of degree one. We call the edge incident with $v$ a \emph{pendant edge} of $SG$.
\end{defn}

For a given signed graph $SG$ and a positive pendant edge $e$ (one can apply vertex switching on $v$ if necessary), Proposition \ref{proposition3.1} tells us that
\begin{center}
$P_{SG}(\lambda)=P_{SG-e}(\lambda)-P_{SG/e}(\lambda)=(\lambda-1)P_{SG/e}(\lambda)$.
\end{center}
The following proposition can be seen as a categorification of this.

\begin{prop}\label{proposition5.5}
Let $e$ be a pendant edge in a signed graph $SG$. For each $i$, we have $H^i(SG)\cong H^i(SG/e)\{1\}$ and $H^i_b(SG)\cong H^i_b(SG/e)\{1\}$.
\end{prop}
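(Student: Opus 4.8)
The plan is to run the long exact sequence of Corollary~\ref{corollary4.10} and to show that its connecting homomorphism is split surjective, which collapses the sequence into short exact sequences that pin down $H^i(SG)$. First I would reduce to the case that $e$ is a \emph{positive} pendant edge joining the degree-one vertex $v$ to a vertex $w$; if $e$ is negative, one applies vertex switching at $v$ and invokes Proposition~\ref{proposition5.3}. The starting observation is that deleting a pendant edge isolates $v$, so $SG-e=(SG/e)\sqcup N_1$, where $N_1$ is the single vertex $v$. Since $v$ is isolated in $SG-e$, no edge of $SG-e$ is ever incident to it, so the differential $d_s$ never alters the colour assigned to $v$; hence $C^{\bullet}(SG-e)$ splits as a direct sum of subcomplexes according to whether $c(v)=1$ or $c(v)=x$, each isomorphic to $C^{\bullet}(SG/e)$ (the $c(v)=x$ summand carrying an extra degree shift by $1$, coming from $\deg x=1$). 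Passing to cohomology, or equivalently applying the disjoint-union formula of Proposition~\ref{proposition5.2} (valid since $N_1$ contributes $\mathbb{Z}\oplus\mathbb{Z}x$), gives $H^i(SG-e)\cong H^i(SG/e)\oplus H^i(SG/e)\{1\}$.

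Next I would analyse the connecting map $\widetilde{m}^{*}\colon H^i(SG-e)\to H^i(SG/e)$ of the long exact sequence, which is induced by the cone map $\widetilde{m}$ from the proof of Theorem~\ref{theorem4.9}. Because $e$ joins the isolated vertex $v$ to the component of $w$, the map $\widetilde{m}$ multiplies $c(v)$ into the colour of $w$'s component via $m$. I would then define a chain map $\iota\colon C^{\bullet}(SG/e)\to C^{\bullet}(SG-e)$ by assigning the colour $1$ to $v$; this is precisely the inclusion of the $c(v)=1$ summand, and it is a chain map because $1$ is a cocycle in the single-vertex complex. Since $m(1\otimes -)=\mathrm{id}$, we get $\widetilde{m}\circ\iota=\mathrm{id}$, so $\widetilde{m}^{*}$ is split surjective. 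Feeding this surjectivity into the long exact sequence forces every map $H^i(SG/e)\to H^{i+1}(SG)$ to vanish, so the sequence breaks into short exact sequences
\begin{center}
$0\to H^i(SG)\xrightarrow{\beta^{*}} H^i(SG-e)\xrightarrow{\widetilde{m}^{*}} H^i(SG/e)\to 0$,
\end{center}
whence $H^i(SG)\cong\ker\widetilde{m}^{*}$.

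It then remains to identify $\ker\widetilde{m}^{*}$. Under the splitting $H^i(SG-e)\cong H^i(SG/e)\oplus H^i(SG/e)\{1\}$, the map $\widetilde{m}^{*}$ restricts to an isomorphism on the first (unshifted) summand $\mathrm{im}\,\iota^{*}$ and to the degree-raising ``multiply-by-$x$'' map on the second summand. Because $\widetilde{m}^{*}$ is an isomorphism on the first summand, the projection onto the second summand restricts to a graded isomorphism $\ker\widetilde{m}^{*}\xrightarrow{\cong} H^i(SG/e)\{1\}$. Chaining these isomorphisms yields $H^i(SG)\cong H^i(SG/e)\{1\}$. As a sanity check, taking the graded Euler characteristic multiplies $P_{SG/e}(1+q)$ by $q$, recovering $P_{SG}(1+q)=(\lambda-1)P_{SG/e}(1+q)$ from Proposition~\ref{proposition3.1}.

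Finally, the balanced statement follows from the identical argument with $C^{\bullet}$ replaced by $C_b^{\bullet}$: one uses the long exact sequence of Corollary~\ref{corollary4.17}, the maps $f_b$ and $\widetilde{m_b}$ of Theorem~\ref{theorem4.16}, and the balanced disjoint-union formula of Proposition~\ref{proposition5.2}; here too the single vertex $N_1$ is balanced, so $C_b^{\bullet}(SG-e)$ splits in exactly the same way. The step I expect to demand the most care is pinning down the connecting map $\widetilde{m}^{*}$ together with the degree bookkeeping, i.e.\ verifying that the surviving summand is $H^i(SG/e)\{1\}$ with shift exactly $1$ rather than an unshifted or doubly-shifted copy; everything else is formal once the chain-level section $\iota$ and the splitting of $C^{\bullet}(SG-e)$ are in place.
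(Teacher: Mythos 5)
Your proposal is correct and follows essentially the same route as the paper: reduce to a positive pendant edge, use the long exact sequence of Corollary~\ref{corollary4.10} together with the disjoint-union identification $H^i(SG-e)\cong H^i(SG/e)\oplus H^i(SG/e)\{1\}$ from Proposition~\ref{proposition5.2}, show the connecting map is surjective via the ``colour $v$ by $1$'' section, and split the sequence to isolate the shifted summand. The only cosmetic differences are that you realize the section at the chain level and identify $\ker\widetilde{m}^{*}$ by hand, where the paper argues on cohomology and cites Lemma~3.10 of the Helme-Guizon--Rong paper for the final splitting step.
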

\begin{proof}
We only prove $H^i(SG)\cong H^i(SG/e)\{1\}$, the balanced version can be proved analogously. By switching $v$ if necessary, we assume the sign of $e$ is positive and $e$ is the first edge. The key observation is, for any $s\subset E(SG)$ the pendant edge $e$ has no effect on the balanced components of $[SG: s]$. The main idea of the proof is similar to the unsigned case \cite{LY}. We sketch the outline here.

According to Corollary \ref{corollary4.10}, we have the following long exact sequence
\begin{center}
$\cdots\to H^{i-1}(SG/e)\to H^i(SG)\to H^i(SG-e)\to H^i(SG/e)\to\cdots$.
\end{center}
On the other hand, since $SG-e=SG/e\cup\{v\}$, Proposition \ref{proposition5.2} tells us that 
\begin{center}
$H^i(SG-e)\cong H^i(SG/e)\otimes(\mathbb{Z}\oplus\mathbb{Z}x)\cong H^i(SG/e)\oplus H^i(SG/e)\{1\}$. 
\end{center}
By identifying $H^i(SG-e)$ with $H^i(SG/e)\oplus H^i(SG/e)\{1\}$, it suffices to show that the map $\gamma^*: H^i(SG/e)\oplus H^i(SG/e)\{1\}\to H^i(SG/e)$ sends $(x, 0)$ to $x$. 

In fact, for any $x=[\sum\limits_ia_iS_i]\in H^i(SG/e)$, where $S_i=(s_i, c_i)$, we extend each $S_i$ to be an enhanced state in $C^i(SG-e)$ by adding an isolated vertex $v$ with color 1. Then the map $\gamma^*$ sends each $[(s_i, c_i)]$ to $[(s_i\cup\{e\})/e, (c_i)_e]$. Notice that adding a positive pendant edge preserves the balance of each component. On the other hand, since $v$ is colored by 1 and multiplication by 1 is just the identity map, it follows that $\gamma^*((x, 0))=x$. 

Therefore $\gamma^*$ is surjective and hence the long exact sequence splits into infinitely many short exact sequences
\begin{center}
$0\to H^i(SG)\to H^i(SG/e)\oplus H^i(SG/e)\{1\}\stackrel{\gamma^*}{\rightarrow}H^i(SG/e)\to0$.
\end{center}
It follows from Lemma 3.10 in \cite{LY} that $H^i(SG)\cong H^i(SG/e)\{1\}$.
\end{proof}

\subsection{Loops and parallel edges}
We first discuss the effect of positive/negative loops on the signed chromatic cohomology. Similar to the fact that signed graphs with positive loops have zero signed chromatic polynomial, positive loops also kill the signed chromatic cohomology.
\begin{prop}\label{proposition5.6}
If a signed graph $SG$ has a positive loop $e$, then $H^i(SG)=H^i_b(SG)=0$.
\end{prop}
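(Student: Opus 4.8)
The plan is to deduce the vanishing from the deletion--contraction machinery already in hand, exploiting the elementary fact that \emph{contracting a loop is the same as deleting it}. Let $v$ be the vertex carrying the positive loop $e$. Since both endpoints of $e$ coincide with $v$, contracting $e$ removes it without identifying any vertices, so $SG/e=SG-e$ as signed graphs. Moreover a positive loop is a positive circuit, so for every $s\subseteq E(SG-e)$ the subgraph $[SG:s\cup\{e\}]$ has exactly the same components as $[SG-e:s]$, with the same balance on each; in particular $e$ always falls into the ``loop'' case in the proof of Theorem~\ref{theorem4.9}.

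First I would examine the morphism $\widetilde{m}\colon C^{\bullet}(SG-e)\to C^{\bullet}(SG/e)$ built there. Because $e$ joins the component of $v$ to itself for every $s$, we have $\widetilde{m}=f_{SG/e}\circ id\circ f_{SG-e}^{-1}$; and since $SG/e=SG-e$ forces $f_{SG/e}=f_{SG-e}$, this collapses to the identity map on $C^{\bullet}(SG-e)$. By Theorem~\ref{theorem4.9} the complex $C^{\bullet}(SG)$ is the mapping cone of $\widetilde{m}$, i.e.\ the mapping cone of the identity, which is acyclic. Concretely, feeding the induced map $\widetilde{m}^{*}=id$ into the long exact sequence of Corollary~\ref{corollary4.10}
\begin{center}
$\cdots\to H^{i-1}(SG-e)\xrightarrow{\alpha^{*}} H^{i}(SG)\xrightarrow{\beta^{*}} H^{i}(SG-e)\xrightarrow{id} H^{i}(SG-e)\to\cdots$
\end{center}
a short diagram chase finishes the job: exactness at $H^{i}(SG-e)$ gives $\mathrm{im}(\beta^{*})=\ker(id)=0$, so $\beta^{*}=0$, whence $\alpha^{*}\colon H^{i-1}(SG-e)\to H^{i}(SG)$ is surjective by exactness at $H^{i}(SG)$; but the preceding connecting map $id$ is surjective, so by exactness at $H^{i-1}(SG-e)$ this same $\alpha^{*}$ is zero. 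Being both surjective and zero forces $H^{i}(SG)=0$ for all $i$.

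For the balanced cohomology I would repeat the argument verbatim with Theorem~\ref{theorem4.16} and Corollary~\ref{corollary4.17} in place of Theorem~\ref{theorem4.9} and Corollary~\ref{corollary4.10}. The only thing to recheck is that $\widetilde{m_b}$ is again the identity: since $e$ connects the component of $v$ to itself we have $\widetilde{m_b}=f_b\circ id\circ f_b^{-1}$, and because the positive loop leaves balance untouched, $f_b$ is computed identically for $SG-e$ and for $SG/e=SG-e$. Hence $C_b^{\bullet}(SG)$ is again the mapping cone of an identity, and $H^{i}_b(SG)=0$ for all $i$.

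I expect the only point requiring care to be the bookkeeping in the second paragraph: verifying that the positive loop genuinely leaves every component count and every balance determination unchanged, so that $C^{\bullet}(SG-e)$ and $C^{\bullet}(SG/e)$ are \emph{literally} the same complex and $\widetilde{m}$ is \emph{literally} the identity rather than merely a quasi-isomorphism. Once that identification is pinned down the vanishing is automatic, mirroring the fact that a loop kills the chromatic cohomology in the unsigned setting of \cite{LY}.
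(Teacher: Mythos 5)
Your proposal is correct and follows the same route as the paper: both rest on the observation that $SG/e=SG-e$ for a positive loop $e$ and that the connecting map $\gamma^{*}$ in the deletion--contraction long exact sequence is then an isomorphism, forcing $H^{i}(SG)=0$ (and likewise for $H^{i}_b$). The only difference is one of detail -- the paper asserts that $\gamma^{*}$ is an isomorphism without elaboration, whereas you justify it by identifying $\widetilde{m}$ with the identity and $C^{\bullet}(SG)$ with the cone of the identity, which is a welcome clarification but not a different argument.
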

\begin{proof}
The assumption $e$ is a positive loop implies that $SG/e=SG-e$. By investigating the map $\gamma^*$ in the following long exact sequence
\begin{center}
$\cdots\to H^{i-1}(SG/e)\to H^i(SG)\to H^i(SG-e)\stackrel{\gamma^*}{\rightarrow}H^i(SG/e)\to H^{i+1}(SG)\to\cdots$,
\end{center}
it is not difficult to find that $\gamma^*$ is an isomorphism. We conclude that $H^i(SG)=0$. The unbalanced case can be proved similarly.
\end{proof}

\begin{prop}\label{proposition5.7}
If a signed graph $SG$ has a negative loop $e$, then $H^i_b(SG)=H^i_b(SG-e)$.
\end{prop}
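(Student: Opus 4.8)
The plan is to show that the two cochain complexes $C^\bullet_b(SG)$ and $C^\bullet_b(SG-e)$ are literally identical, so that their cohomology groups agree term by term. The starting observation is purely combinatorial: a negative loop is by itself a negative circuit, since the product of signs along it equals $\sigma(e)=-1$. Consequently, for any $s\subseteq E(SG)$ with $e\in s$, the spanning subgraph $[SG:s]$ contains this negative circuit and is therefore unbalanced. By the module assignment preceding Proposition \ref{proposition4.11}, an unbalanced $[SG:s]$ contributes $M^b_s(SG)=0$. Hence in $C^i_b(SG)=\bigoplus_{s\subseteq E(SG),|s|=i}M^b_s(SG)$ every summand indexed by an $s$ containing $e$ vanishes, and only the subsets $s\subseteq E(SG-e)$ survive.

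Next I would identify the surviving summands with those of $SG-e$. For $s\subseteq E(SG-e)$ the signed subgraph $[SG:s]$ is the same as $[SG-e:s]$, because the loop $e$ plays no role when $e\notin s$; in particular one is balanced exactly when the other is, and $M^b_s(SG)=M^b_s(SG-e)$. This yields an equality of graded groups $C^i_b(SG)=C^i_b(SG-e)$ for every $i$. It then remains to check that the differentials coincide under this identification. Recall that $d_b=f_b\circ d\circ f_b^{-1}$, where $d$ sums over all edges not in $s$ and $f_b$ is the projection of Definition \ref{definition4.13}. For a balanced enhanced state $S=(s,c)$ with $e\notin s$, the summand of $d(S)$ obtained by adjoining the negative loop $e$ produces a subgraph containing the negative circuit $e$, hence an unbalanced one, which $f_b$ annihilates; every other summand is indexed by an edge of $SG-e$ and reproduces precisely the corresponding summand of the differential on $C^\bullet_b(SG-e)$. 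Thus $d_b$ on $C^\bullet_b(SG)$ matches $d_b$ on $C^\bullet_b(SG-e)$, the two cochain complexes are the same, and $H^i_b(SG)=H^i_b(SG-e)$ follows.

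The step that most needs care, and the reason the argument is run at the cochain level rather than via a long exact sequence, is the following: one cannot invoke the deletion--contraction sequence of Corollary \ref{corollary4.17} here, since that requires $e$ to be a \emph{positive} edge, and vertex switching preserves the sign of a loop, so a negative loop can never be converted to a positive one. Hence the direct identification of complexes is the natural route, and the only genuinely substantive point is the verification that the extra loop--term in the differential is killed by $f_b$ --- which is exactly the content of the observation that a negative loop is an unbalanced circuit.
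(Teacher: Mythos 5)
Your proposal is correct and follows essentially the same route as the paper: both arguments work directly at the cochain level, observing that any $s$ containing the negative loop $e$ yields an unbalanced $[SG:s]$ and hence a vanishing summand $M^b_s(SG)=0$, so that $C^i_b(SG)$ reduces to $C^i_b(SG-e)$. Your explicit verification that the differentials agree (the loop term of $d$ being annihilated by $f_b$) and your remark that Corollary \ref{corollary4.17} is unavailable because a loop's sign cannot be switched are both points the paper leaves implicit, so your write-up is if anything slightly more complete.
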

\begin{proof}
Since $e$ is negative, Corollary \ref{corollary4.17} does not work here. However, if we go back to the cochain complex, we have the following decomposition
\begin{center}
$C_b^i(SG)=\bigoplus\limits_{e\notin s\subseteq E(SG), |s|=i}M_s^b(SG)\oplus\bigoplus\limits_{e\in s\subseteq E(SG), |s|=i}M_s^b(SG)$.
\end{center}
If a subset $s\subseteq E(SG)$ includes the negative loop $e$, then $[SG: s]$ is unbalanced, therefore the associated $M_s^b(SG)=0$. It follows that the second summand above vanishes and the result follows immediately.
\end{proof}

Now we turn to discuss the effect of parallel edges on signed chromatic cohomology. Recall that two edges join the same pair of vertices, then these two edges are called \emph{parallel edges}. Here we allow the two endpoints coincide with each other.

\begin{prop}\label{proposition5.8}
Let $SG$ be a signed graph, and $e, e'\in E(SG)$ are a pair of parallel edges with the same sign. Then we have $H^i(SG)=H^i(SG-e')$ and $H^i_b(SG)=H^i_b(SG-e')$. In other words, the signed chromatic cohomology groups are unchanged if one replaces the parallel edges with the same sign by a single one.
\end{prop}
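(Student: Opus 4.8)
The plan is to exhibit $C^{\bullet}(SG-e')$ as a quotient of $C^{\bullet}(SG)$ by an acyclic subcomplex, so that the projection induces the isomorphism on cohomology; the balanced case then runs along identical lines. By Proposition~\ref{proposition4.8} I am free to order $E(SG)$, and I will take $e'$ to be the last edge. Write $v_1,v_2$ for the common endpoints of $e$ and $e'$ (allowing $v_1=v_2$). The single fact I need is that same-sign parallel edges are \emph{redundant}: since $\sigma(e)=\sigma(e')$, replacing $e'$ by $e$ in any circuit preserves its sign and the bigon $\{e,e'\}$ is positive, so for every $s$ with $e'\in s$, adjoining or deleting $e$ alters neither $k(s)$ nor the balance of any component of $[SG:s]$. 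In particular $M_{s}(SG)=M_{s\cup\{e\}}(SG)$ for such $s$, while $M_{s}(SG)=M_{s}(SG-e')$ whenever $e'\notin s$, since then $[SG:s]=[SG-e':s]$ verbatim.

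Next I would split $C^{\bullet}(SG)=\bigoplus_{s}M_{s}(SG)$ according to whether $e'\in s$. The states with $e'\in s$ span a subcomplex $B^{\bullet}$, because the differential only ever adjoins edges; the complementary states, with $e'\notin s$, span the quotient, which by the redundancy fact is identified as a cochain complex with $C^{\bullet}(SG-e')$ (with $e'$ ordered last, the induced differential is obtained from $d_{s}$ simply by discarding the $e'$-term, and the surviving signs $(-1)^{n(f)}$ agree with those computed in $SG-e'$). This gives a short exact sequence $0\to B^{\bullet}\to C^{\bullet}(SG)\to C^{\bullet}(SG-e')\to0$, and it remains only to show that $B^{\bullet}$ is acyclic.

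To prove acyclicity I would decompose $B^{\bullet}=B_{0}^{\bullet}\oplus B_{1}^{\bullet}$, where $B_{1}^{\bullet}$ is spanned by the states with $e\in s$ and $B_{0}^{\bullet}$ by those with $e\notin s$ (all of them containing $e'$). Exactly as in the proof of Theorem~\ref{theorem4.9}, the differential of $B^{\bullet}$ is triangular for this decomposition, with off-diagonal entry the ``adjoin $e$'' map $\delta\colon B_{0}^{\bullet}\to B_{1}^{\bullet}$, so that $B^{\bullet}$ is the mapping cone of $\delta$. Since $v_1,v_2$ are already joined by $e'$, the edge $e$ always connects a component to itself; hence $\delta$ leaves the colour assignment untouched and is just the degree-preserving bijection $s\mapsto s\cup\{e\}$ on enhanced states, which by the redundancy fact respects the balanced/unbalanced labelling. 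Thus $\delta$ is an isomorphism of cochain complexes, its mapping cone $B^{\bullet}$ is acyclic, and the long exact sequence of the short exact sequence above yields $H^{i}(SG)\cong H^{i}(SG-e')$.

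Finally, the balanced statement follows by rerunning the argument with $M^{b}_{s}$, $C^{\bullet}_{b}$, $f_{b}$ and $d_{b}$ in place of $M_{s}$, $C^{\bullet}$, $f$ and $d$: the redundancy fact again shows that adjoining $e$ carries balanced subgraphs to balanced subgraphs and preserves $k(s)$, so the same decomposition and mapping-cone computation apply. I expect the only real work to be bookkeeping: verifying that $\delta$ commutes with the internal differentials up to the sign built into the cone (which is forced by $d^{2}=0$) and that placing $e'$ last makes the quotient differential match $d_{s}$ on $SG-e'$ exactly. The conceptual heart is the sign-preserving circuit swap, and it is precisely there that the hypothesis $\sigma(e)=\sigma(e')$ enters; for edges of opposite sign the bigon $\{e,e'\}$ would be negative, $\delta$ would fail to preserve balance, and the statement itself would break down.
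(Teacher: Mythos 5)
Your argument is correct, and it takes a genuinely different route from the paper. The paper proves this proposition by a case analysis on the sign of $e,e'$ and on whether they are loops: the positive case is reduced to Proposition \ref{proposition5.6} (a positive loop appears in $SG/e'$, killing $H^{\bullet}(SG/e')$ in the long exact sequence of Corollary \ref{corollary4.10}); the negative non-loop case is reduced to the positive case by vertex switching; and the hardest case, two parallel negative loops, requires an auxiliary signed graph $SG'$ obtained by splitting the vertex and introducing a new positive edge $e_0$, followed by a five-lemma chase. Your proof instead works uniformly at the chain level: you split $C^{\bullet}(SG)$ by whether $e'\in s$, identify the quotient with $C^{\bullet}(SG-e')$, and show the subcomplex of states containing $e'$ is acyclic as the mapping cone of the ``adjoin $e$'' isomorphism. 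The key verifications all hold: with $e'\in s$ the vertices $v_1,v_2$ are already in one component, so adjoining $e$ changes neither $k(s)$ nor (because $\sigma(e)=\sigma(e')$, so any circuit through $e$ has the same sign as the corresponding circuit through $e'$ and the bigon is positive) the balance of any component, whence $\delta$ is a degree-preserving bijection on enhanced states compatible with $f$ and $f_b$; and $\delta d_0=-d_1\delta$ is forced by $d^2=0$. What your approach buys is a single argument covering all four sign/loop configurations at once, with an explicit chain-level map inducing the isomorphism, at the cost of some sign and identification bookkeeping; what the paper's approach buys is reuse of the already-established long exact sequences and loop propositions, at the cost of the ad hoc vertex-splitting construction in the doubled-negative-loop case.
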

\begin{proof}
If both $e$ and $e'$ are positive, we divide our discussion into two cases.
\begin{itemize}
\item The two endpoints of $e$ and $e'$ coincide, i.e. both $e$ and $e'$ are positive loops. In this case, the result follows from Proposition \ref{proposition5.6}.
\item The two endpoints of $e$ and $e'$ are distinct. Then $e$ becomes a positive loop in $SG/e'$, hence we obtain $H^i(SG/e')=H^i_b(SG/e')=0$. It follows from the two long exact sequences that $H^i(SG)=H^i(SG-e')$ and $H^i_b(SG)=H^i_b(SG-e')$.
\end{itemize}

If both $e$ and $e'$ are negative, there are also two situations.
\begin{itemize}
\item Both $e$ and $e'$ are not loops. By switching one of the two endpoints of $e$ and $e'$ we obtain two positive parallel edges, which has been discussed above.
\item Both $e$ and $e'$ are loops. The balanced case $H^i_b(SG)=H^i_b(SG-e')$ follows directly from Proposition \ref{proposition5.7}. The rest of the proof is devoted to show that $H^i(SG)=H^i(SG-e')$. Suppose both $e$ and $e'$ connects $v\in V(SG)$ to itself. We define a new signed graph $SG'$ by splitting $v$ into two vertices, say $v_1, v_2$, and adding a new positive edge $e_0$ which connects $v_1$ and $v_2$. All edges incident to $v$ in $SG$ are now incident to $v_1$ in $SG'$. See Figure \ref{figure5}.
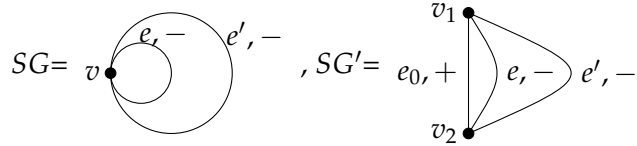
\begin{figure}[h]
	\centering
		$SG$=
		\begin{tikzpicture}[baseline=0]
			\draw[fill] (0,0) circle (.07) node [left] {$v$};
			\draw (0.4,0) circle (.4);
			\draw (0.8,0) circle (.8);
			\node at (0.7,0.5) {$e,-$};
			\node at (1.9,0.5) {$e',-$};
		\end{tikzpicture}
		, $SG'$=
		\begin{tikzpicture}[baseline=0]
			\draw[fill] (0,0.8) circle (.07) node [left] {$v_1$};
			\draw[fill] (0,-0.8) circle (.07) node [left] {$v_2$};
			\draw (0,0.8)-- node [left] {$e_0,+$} (0,-0.8);
			\draw (0,0.8)..controls (0.5,0)..node [right] {$e,-$} (0,-0.8);
			\draw (0,0.8)..controls (1.8,0)..node [right] {$e',-$} (0,-0.8);
		\end{tikzpicture}
\caption{Adding a new vertex}
\label{figure5}
\end{figure}

Now we have $SG=SG'/e_0, SG-e'=(SG'-e')/e_0$ and $e, e'$ are negative edges connecting $v_1$ and $v_2$ in $SG'$ and $SG'-e_0$. According to our previous discussion, if we switch $v_2$, delete $e'$ and then switch $v_2$ again, these operations induce isomorphisms $H^i(SG')\cong H^i(SG'-e')$ and $H^i(SG'-e_0)\cong H^i(SG'-e_0-e')$. The homomorphism $\delta^*: H^i(SG'/e_0)\to H^i((SG'-e')/e_0)$ in the commutative diagram below can be defined as follows. Given an enhanced state $S=(s, c)$ of $SG'/e_0$, if $e'\notin s$, then we define $\delta(S)=S$, which is also an enhanced state of $(SG'-e')/e$. Otherwise, we define $\delta(S)=0$. This map makes the diagram below commutative and induces a homomorphism $\delta^*: H^i(SG'/e_0)\to H^i((SG'-e')/e_0)$.
\begin{equation*}
\begin{aligned}
		\xymatrix{
		H^i(SG')\ar[d]^\cong \ar[r] & H^i(SG'-e_0)\ar[d]^\cong \ar[r] & H^i(SG'/e_0)\ar[d]^{\delta^*}\ar[r] & H^{i+1}(SG')\ar[d]^\cong \ar[r] & H^{i+1}(SG'-e_0)\ar[d]^\cong\\
		H^i(SG'-e')\ar[r] & H^i(SG'-e'-e_0)\ar[r] & H^i((SG'-e')/e_0)\ar[r] & H^{i+1}(SG'-e')\ar[r] & H^{i+1}(SG'-e'-e_0)}
\end{aligned}
\end{equation*}
We conclude that
\begin{center}
$H^i(SG)\cong H^i(SG'/e_0)\cong H^i((SG'-e')/e_0)\cong H^i(SG-e')$,
\end{center}
the fact that $\delta^*$ is an isomorphism is derived from the five lemma. The proof is finished.
\end{itemize}
\end{proof}

\subsection{Trees and polygon graphs}
As an application of the properties discussed above, we describe the signed chromatic cohomology groups for some classes of signed graphs.
\begin{ex}\label{ex:SNmn}
Let $SN_m^n$ be the signed graph with $m$ vertices, on $n$ of which there is a negative loop. When $m=n=1$, we can calculate the signed chromatic cohomology groups for $SN_1^1$ as follows.
\begin{equation*}
\begin{aligned}
			C^0(SN_1^1)&=\text{span}\langle(0,1),(0,x)\rangle &C^1(SN_1^1)&=\text{span}\langle(1,1)\rangle\\
			B^0(SN_1^1)&=0 &B^1(SN_1^1)&=\text{span}\langle(1,1)\rangle\\
			Z^0(SN_1^1)&=\text{span}\langle(0,x)\rangle &Z^1(SN_1^1)&=\text{span}\langle(1,1)\rangle\\
			H^0(SN_1^1)&=Z^0(SN_1^1)/B^0(SN_1^1)\cong\mathbb{Z}x &H^1(SN_1^1)&=Z^1(SN_1^1)/B^1(SN_1^1)\cong0\\
			C^0_b(SN_1^1)&=\text{span}\langle(0,1),(0,x)\rangle &C^1_b(SN_1^1)&=0\\
			B^0_b(SN_1^1)&=0 &B^1_b(SN_1^1)&=0\\
			Z^0_b(SN_1^1)&=\text{span}\langle(0,1),(0,x)\rangle &Z^1_b(SN_1^1)&=0\\
			H^0_b(SN_1^1)&=Z^0_b(SN_1^1)/B^0_b(SN_1^1)\cong\mathbb{Z}\oplus\mathbb{Z}x &H^1_b(SN_1^1)&=Z^1_b(SN_1^1)/B^1_b(SN_1^1)\cong0\\
\end{aligned}
\end{equation*}
Then Proposition \ref{proposition5.2} implies that
	\[
	\begin{aligned}
		H^i(SN_m^n)&\cong
	\begin{cases}
		(\mathbb{Z}\oplus\mathbb{Z}\{1\})^{\otimes(m-n)}\otimes(\mathbb{Z}\{1\})^{\otimes n}, & i=0;\\
		0, & i\geq 1.\\
	\end{cases}\\
	H^i_b(SN_m^n)&\cong
	\begin{cases}
		(\mathbb{Z}\oplus\mathbb{Z}\{1\})^{\otimes m}, & i=0;\\
		0, & i\geq 1.\\
	\end{cases}\\
	\end{aligned}
	\]
We remark that the fact $H^i_b(SN_m^n)\cong H^i_b(SN_m^0)$ also can be deduced from Proposition \ref{proposition5.7}.
\end{ex}

\begin{ex}
Let $ST_n=(T_n,\sigma)$, where $T_n$ is a tree with $n$ edges. By repeatedly using Proposition \ref{proposition5.5} one obtains
\begin{center}
$H^i(ST_n)\cong H^i_b(ST_n)\cong 
\begin{cases}
\mathbb{Z}\{n\}\oplus\mathbb{Z}\{n+1\}, & i=0;\\
0, & i\geq 1.\\
\end{cases}$
\end{center}
\end{ex}

\begin{ex}
Let $SP_n=(P_n,\sigma)$ be an unbalanced polygon graph with $n$ edges. When $n=1$, the case $SP_1=SN_1^1$ has been discussed in Example \ref{ex:SNmn}. For this reason, next let us assume $n\geq 2$. 

We remark that although unbalanced polygon graphs on the unsigned polygon graph $P_n$ are not unique, they are all equivalent. In order to see this, first notice that two unbalanced polygon graphs with only one negative edge are equivalent. If an unbalanced polygon graph has more than three negative edges, choose two of them such that we can find a positive path connecting them. By applying vertex switching on the endpoints of this positive path we obtain a new unbalanced polygon graph with two negative edges less.
	
We label the vertices of $SP_n$ by $v_1,v_2,\cdots, v_n$ monotonically so that each $v_i$ is adjacent to $v_{i\pm1}$ $(1\leq i\leq n)$, where $v_0=v_n$ and $v_{n+1}=v_1$. Let $e$ be a positive edge connecting $v_1$ and $v_n$, the $SP_n/e=SP_{n-1}$ and $SP_n-e$ is a tree. Then we have the following long exact sequence
\begin{center}
$\cdots\to H^{i-1}(SP_n-e)\to H^{i-1}(SP_n/e)\to H^{i}(SP_n)\to H^i(SP_n-e)\to\cdots$.
\end{center}
As $SP_n-e$ is a tree, we have $H^i(SP_n-e)=0$ for all $i\geq 1$. Thus for any $i\geq2$, we have $H^i(SP_n)\cong H^{i-1}(SP_n/e)\cong H^{i-1}(SP_{n-1})$, and it follows that
\begin{equation*}
H^i(SP_n)\cong
\begin{cases}
H^1(SP_{n-i+1}), & \mbox{if } 2\leq i\leq n;\\
0, & \mbox{if } i>n.
\end{cases}
\end{equation*}
Then for each $n\geq 2$, $H^n(SP_n)\cong H^1(SP_1)\cong 0$, $H^{n-1}(SP_n)\cong H^2(SP_3)\cong\mathbb{Z}\{1\}$, which has been calculated in Example \ref{ex4.2}.

On the other hand, it has been calculated in \cite{LY} that
\[\text{For $i=0$, }H^{0}\left(P_{n}\right) \cong\left\{\begin{array}{ll}
\mathbb{Z}\{n\} \oplus \mathbb{Z}\{n-1\} & \text { if } n \text { is even and } n \geq 2; \\
\mathbb{Z}\{n\} & \text { if } n \text { is odd and } n \geq 2; \\
0 & \text { if } n=1.
\end{array}\right.\]
\begin{equation*}
\text{For $i>0$, }H^{i}\left(P_{n}\right) \cong\left\{\begin{array}{ll}
\mathbb{Z}_{2}\{n-i\} \oplus \mathbb{Z}\{n-i-1\} & \text { if } n-i \geq 2 \text { and $n$ is even;} \\
\mathbb{Z}\{n-i\} & \text { if } n-i \geq 2 \text { and $n$ is odd;} \\
0 & \text { if } n-i \leq 1.
\end{array}\right.
\end{equation*}
And Using Proposition \ref{proposition5.1}, we have $H^i(SP_n)=H^i(P_n)$ for all $i\leq n-2$, then we obtain all the cohomology groups.
\[\text{For $i=0$, }H^{0}\left(SP_{n}\right) \cong\left\{\begin{array}{ll}
\mathbb{Z}\{n\} \oplus \mathbb{Z}\{n-1\} & \text { if } n \text { is even and } n \geq 2;\\
\mathbb{Z}\{n\} & \text { if } n \text { is odd and } n \geq 2;\\
\mathbb{Z}\{1\} & \text { if } n=1.
\end{array}\right.\]
\begin{equation*}
\text{For $i>0$, }H^{i}\left(SP_{n}\right) \cong\left\{\begin{array}{ll}
\mathbb{Z}_{2}\{n-i\} \oplus \mathbb{Z}\{n-i-1\} & \text { if } i\leq n-2 \text { and $n$ is even;} \\
\mathbb{Z}\{n-i\} & \text { if } i\leq n-2 \text { and $n$ is odd;} \\
\mathbb{Z}\{1\} & \text{ if } i=n-1;\\
0 & \text { if } i\geq n.
\end{array}\right.
\end{equation*}
We work out the balanced cohomology groups parallelly.
\[\text{For $i=0$, }H^{0}_b\left(SP_{n}\right) \cong\left\{\begin{array}{ll}
\mathbb{Z}\{n\} \oplus \mathbb{Z}\{n-1\} & \text { if } n \text { is even and } n \geq 2; \\
\mathbb{Z}\{n\} & \text { if } n \text { is odd and } n \geq 2; \\
\mathbb{Z}\{1\}\oplus\mathbb{Z} & \text { if } n=1.
\end{array}\right.\]
\begin{equation*}
\text{For $i>0$, }H^{i}_b\left(P_{n}\right) \cong\left\{\begin{array}{ll}
\mathbb{Z}_{2}\{n-i\} \oplus \mathbb{Z}\{n-i-1\} & \text { if } i\leq n-2 \text { and $n$ is even;} \\
\mathbb{Z}\{n-i\} & \text { if } i\leq n-2 \text { and $n$ is odd;} \\
\mathbb{Z}\{1\}\oplus\mathbb{Z} & \text{ if } i=n-1;\\
0 & \text { if } i\geq n.
\end{array}\right.
\end{equation*}
\end{ex}

\section*{Acknowledgements}
ZY Cheng is supported by NSFC 11771042 and NSFC 12071034. ZY Lei, YT Wang and YG Zhang are supported by an undergraduate research project of Beijing Normal University.


\end{document}